\newcommand{\Z}{\mathbb{Z}}
\newcommand{\Q}{\mathbb{Q}}
\newcommand{\tens}[1]{%
  \mathbin{\mathop{\otimes}\displaylimits_{#1}}}
\def\Mod{{\rm Mod}}
\def\Ext{{\rm Ext}}
\def\Hom{{\rm Hom}}
\def\Im{{\rm Im}}
\def\Ker{{\rm Ker}}
\def\Coker{{\rm Coker}}
\def\wdim{{\rm wdim}}
\newtheorem{thm}{\bf Theorem}[section]
\newtheorem{cor}[thm]{\bf Corollary}
\newtheorem{lem}[thm]{\bf Lemma}
\newtheorem{prop}[thm]{\bf Proposition}
\newtheorem{Def}[thm]{\bf Definition}
\newtheorem{ex}[thm]{\bf Example}
\def\I{{\mathcal{I}}}
\def\PI{{\mathcal{PI}}}
\def\p{{\mathcal{P}}}
\def\f{{\mathcal{F}}}
\def\x{{\mathcal{X}}}
\def\A{{\mathscr{A}}}
\def\y{{\mathcal{Y}}}
\def\ML{{\mathcal{ML}}}
\def\X{{ {\mathcal{X}}}^{-1}}
\def\Y{{ {\mathcal{Y}}}^{-1}}
\def\F{{ {\mathfrak{F}}}^{-1}}
\def\P{{\underline {\mathfrak{Pr}}}^{-1}}
\def\In{{\underline {\mathfrak{In}}}^{-1}}
\def\A{{ {\mathcal{A}p}}}
\def\+{^+}
\begin{document}
	\title{Flat-precover completing domains}
	\author{Houda Amzil, Driss  Bennis, J. R. Garc\'{\i}a Rozas and Luis Oyonarte}

	\date{}
	
	\maketitle

	\bigskip
	
	\noindent{\large\bf Abstract.} Recently, many authors have embraced the study of certain properties of modules such as projectivity, injectivity and flatness from an alternative point of view. Rather than saying a module has a certain property or not, each module is assigned a relative domain which, somehow, measures to which extent it has this particular property. In this work, we introduce a new and fresh perspective on flatness of modules. However, we will first investigate a more general context by introducing domains relative to a precovering class $\x$. We call these domains $\x$-precover completing domains. In particular, when $\x$ is the class of flat modules, we call them flat-precover completing domains. This approach allows us to provide a common frame for a
	number of classical notions. Moreover, some known results are generalized and some classical rings are characterized in terms of these domains.

	
	\bigskip
	
	\small{\noindent{\bf Key Words.} $\x$-Precover completing domains, flat-precover completing domains, precovers }

	\small{\noindent{\bf 2010 Mathematics Subject Classification.} 16D40, 16D50, 16D80}
	
	
	
	\section{Introduction}
	
	Throughout this paper, $R$ will denote an associative ring with identity and modules will be unital left $R$-modules, unless otherwise explicitely stated.
	As usual, we denote by $R$-$\Mod$ the category of left $R$-modules. We denote the class of projective modules by $\p$, the class of injective modules by $\I$ and the class of flat modules by $\f$. For a module $M$, $E(M)$, $PE(M)$, and $M^+$ denote its injective envelope, its pure-injective preenvelope and its character module $\Hom_{\Z}(M,\Q/\Z)$.

	To any given class of modules $\mathcal{L}$ we associate its right Ext-orthogonal class, $$\mathcal{L}^{\perp}= \{M\in R\text{-}\Mod \mid \Ext^1(L,M) = 0, L\in \mathcal{L}\},$$ and its left Ext-orthogonal class, $${}^{\perp}\mathcal{L}= \{M\in R\text{-}\Mod \mid \Ext^1(M,L) = 0, L\in \mathcal{L}\}.$$ In particular, if $\mathcal{L}= \{M\}$ then we simply write ${}^{\perp}\mathcal{L}={}^{\perp} M$ and  $\mathcal{L}^{\perp}=M^{\perp}$.

	Recall that given a class of modules $\x$, an $\x$-precover of a module $M$ is a morphism $X\rightarrow M$ with $X\in \x$, such that $\Hom(X',X)\rightarrow \Hom (X',M)\rightarrow 0$ is exact for any $X'\in\x$. An $\x$-precover
	$\phi :X\to M$ is said to be an $\x$-cover if every endomorphism $g :X\to X$ such that $\phi g=\phi$ is
	an isomorphism. Dually we have the definitions of an $\x$-preenvelope and an $\x$-envelope. If every module has an $\x$-(pre)cover, $\x$ is said to be (pre)covering. Similarly, if every object has an $\x$-(pre)envelope, we say that $\x$ is (pre)enveloping. 
	
	Relative notions of projectivity, injectivity and flatness were introduced as a tool to evaluate the extent of these properties for any given module. In contrast to the well-known notion of relative injectivity, Aydoğdu and L\'opez-Permouth introduced in \cite{PinSergio} the notion of subinjectivity. Then Holston et al. introduced in \cite{Sergio} the projective analogue of subinjectivity and called it subprojectivity. Namely, a module $M$ is said to be $N$-subprojective if for
	every epimorphism $g: B\to N$ and homomorphism $f: M \to N$ , then there exists a
	homomorphism $h: M \to B$ such that $gh = f$. For a module $M$, the subprojectivity
	domain of $M$, $\P(M)$ , is defined to be the collection of all modules $N$ such that $M$ is $N$-subprojective.
	
	On the other hand, the study of flatness was accessed in \cite{Rugged} and \cite{Dur} from two slightly similar alternative perspectives as both use the tensor product. Indeed, relative flatness studied in \cite{Rugged} is defined as follows: Given a right R-module $N$, a left $R$-module $M$ is said to be flat relative to $N$, relatively flat to $N$, or $N$-flat if the canonical morphism $K\tens{R}M \to N\tens{R}M$ is a monomorphism for every submodule $K$ of $N$. The flat domain of a module $M$, $\f^{-1}(M)$, is defined to be the collection of right modules $N$ such that $M$ is $N$-flat. From the definition, it is clear that a module $M$ is flat if and only if $\f^{-1}(M)=\Mod\text{-}R$. Durğun in \cite{Dur}, modifies in a subtle way the notion of relative flatness domains and defines absolutely pure domains. Namely, given a left module $M$ and a right module $N$, $N$ is said to be absolutely $M$-pure if $N\tens{R}M \to B\tens{R}M$ is a monomorphism for every extension $B$ of $N$. For a module $M$, the absolutely
	pure domain of $M$, $\A(M)$, is defined to be the collection of all modules $N$ such that $N$ is
	absolutely $M$-pure. Clearly, a module $M$ is flat if and only if $\A(M)=\Mod\text{-}R$.
	
    In this paper, we define a new alternative perspective on flatness of modules, inspired by similar ideas studied in several papers about subprojectivity domains. In this process, projective modules should in general be replaced by flat modules. Thus in this work, we follow a pattern which has been somewhat established in previous studies. However, the nature of projectivity and flatness are curiously different that each domain can be remarkably unique. This will be shown throughout this paper.

	The paper is organized as follows:
	
	We start first by investigating a general context by introducing domains relative to a precovering class $\x$. 
	To this end, we define the $\x$-precover completing domain $\X(M)$ for a module $M$ (see Definition \ref{Def-subd}). When $\x$ is the class of projective modules, $\x$-precover completing domains will be nothing but subprojectivity domains. Moreover, it is easy to show that a module $M\in \x$ if and only if its $\x$-precover completing domain consists of the entire class of modules $R$-$\Mod$. And if $N\in \x$, then $M$ is vacuously ($N$,$\x$)-precover completing. Section 2 is devoted to the basic properties of $\x$-precover completing domains of modules.  We extend the study done in \cite{SubprojAb} and \cite{Sergio} to the relative case and gather different closeness properties that $\x$-precover completing domains verify (see Propositions \ref{closure-prop-X},\ref{prop-clos-kern} and \ref{prop-0ABX0}). Then, we investigate  the $\x$-precover completing domain of a module embedded in a module in the class $\x$. Namely, we show that given a short exact sequence of the form $0\to M \to X\to M'\to 0$, we have ${M'}^{\perp}\subseteq\X(M)$ (Proposition \ref{M-X-M'}) and as a consequence, $\X(M)$ contains the class of injective modules. This brings us to Propositions \ref{prop-Inj-X(M)} and \ref{prop-PureInj-X(M)} where we establish equivalent conditions for $\x$-precover completing domains to contain the particular class of injective modules $\I$ and that of pure-injective modules $\PI$, respectively. Finally, in Proposition \ref{prop-X-Y} we compare the domains relative to two precovering classes $\x$ and $\y$ such that $\x\subseteq \y$.\\
	 In Section 3, we shed light on flat-precover completing domains. This leads to new characterizations of known notions. For instance, for any ring $R$, (1) $\wdim(R)\leq 1$ if and only if the flat-precover completing domain of any module is closed under submodules; (2) $R$ is right coherent if and only if the flat-precover completing domain of any module is closed under direct products (Proposition \ref{closure-prop}). This approach also allows us to give interesting examples. Indeed, we show that if $M$ is Ding projective then there exists a Ding projective module $M'$ such that $\F(M)=M'^{\perp}$, and if $M$ is a finitely presented and strongly Gorenstein flat module then $\F(M)=M^\perp$ (see Example \ref{cor-SGF1}). Then, as in the classical homological context where the relation between flat and projective modules is extensively studied, we investigate the relation between flat-precover completing domains and subprojectivity domain (see Proposition \ref{prop-flat-proj-dom}). This shows us a new side to well-known notions (see Corollary \ref{cor-coinc-subflat-subproj} and Proposition \ref{prop-shenglin}). In Proposition \ref{prop-Flatness-AbsPure}, we provide a new perspective on the known-result proved in \cite{Lambek}: A module is flat if and only if its character module is absolutely pure. Finally in \cite{CS}, coherent rings are characterized by the equivalence of the absolutely purity of modules and the flatness of their character modules, here we show the counterpart result in our context. Namely, we characterize right coherent rings by means of flat-precover completing domains and absolutely pure domains (see Proposition \ref{prop-Flatness-AbsPure2}).\\
	 Finally, in Section 4 we focus on the relation between flat-precover completing domains and subinjectivity domains. We start first with two results (Proposition \ref{prop-Flatness-Subinj} and \ref{prop-QF}) which relates flat-precover completing domains and subinjectivity domains. Then we consider the following question: What is the structure of a ring over which the flat-precover completing domains and subinjectivity domains coincide? We prove that a ring satisfies this condition if and only if every factor ring of $R$ is $QF$ if and only if $R$ is isomorphic to a product of full matrix rings over Artinian chain rings (see Theorem \ref{thm-coinc-subflat-subinj}). Then, we give equivalent characterizations for flat-precover completing domains to contain the class of injective modules (Proposition \ref{prop-embedds}). As a consequence, this result allows us to give a straightforward proof (see Corollary \ref{cor-IF-rings}) to characterizations of $IF$-rings established by Colby in \cite[Theorem 1]{Colby}.
	 In \cite{PinSergio}, a module with the smallest possible subinjectivity domain is said to be indigent.  Here we introduce the opposite concept to that of flat modules; that is, modules for which the flat-precover completing domains are as small as possible. We call these new modules f-rugged modules. We show that there are f-rugged modules for any arbitrary ring and finally, we establish a connection between f-rugged modules and indigent modules (Proposition \ref{prop-frugged}).
	
\section{Relative domains: Basic results}\label{section-subX}
In this section, we give general results which most of them can be proved as in \cite{SubprojAb}, so we will omit the proofs. \\
In what follows, $\mathcal{X}$ will denote a precovering class of modules which satisfies the following conditions: $\x$ is closed under isomorphisms, i.e., if $M\in \x$ and $N \cong M$, then $N \in \mathcal{X}$. We also assume that $\x$ is closed under taking finite direct sums and direct summands, i.e., if $M_1,...,M_t \in \x$ then $M_1 \oplus \cdots \oplus M_t \in \mathcal{X}$; if $M=N\oplus L \in \mathcal{X}$ then $N,L \in \mathcal{X}$.

\begin{Def}\label{Def-subd}
	
	Given modules $M$ and $N$, $M$ is said to be ($N$,$\x$)-precover completing if for every morphism $f:M\to N$, and every $\mathcal{X}$-precover $g:{X}\to N$, there exists a morphism $h:M\to {X}$ such that $gh=f$. When no confusion arises, we will omit the name of the class and say simply that $M$ is $N$-precover completing.
	The \textit{$\x$-precover completing domain} of a module $M$ is defined to be the collection 
	$$\X(M):=\{ N \in R\text {-}\Mod: M \ \text {is } \ N\text{-precover completing}\}.$$
\end{Def}

Then, if we take $\x$ to be the class of projective modules, the $\x$-precover completing domains are simply subprojectivity domains as defined in \cite{SubprojAb} and \cite{Sergio}. 

As the subprojectivity domain does, the $\x$-precover completing domain measures when a module belongs to the class $\x$.  This can be seen from the following result which follows simply from the definition.

\begin{prop}\label{prop-MinX}
	Let $M$ be a module. Then the following conditions are equivalent:
	\begin{enumerate}
		\item $M\in \x$.
		\item  $\X(M)$ is the whole category $R$-$\Mod$.
		\item $M\in\X(M)$.
	\end{enumerate} 
\end{prop}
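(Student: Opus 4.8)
The plan is to prove the cyclic chain of implications $(1)\Rightarrow(2)\Rightarrow(3)\Rightarrow(1)$, since the equivalence of three conditions is most cleanly handled this way. The key technical fact I will exploit is that, by our standing assumptions, $\x$ is closed under isomorphisms, so any module isomorphic to a member of $\x$ is itself in $\x$; and crucially, for any $\x$-precover $g\colon X\to N$, the class $\x$ detects liftability through $g$.

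\smallskip

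For $(1)\Rightarrow(2)$, assume $M\in\x$ and let $N$ be an arbitrary module. Given any morphism $f\colon M\to N$ and any $\x$-precover $g\colon X\to N$, the defining property of an $\x$-precover says that $\Hom(M,X)\to\Hom(M,N)\to 0$ is exact \emph{precisely because} $M\in\x$ (taking $X'=M$ in the definition). Hence $f$ factors as $f=gh$ for some $h\colon M\to X$, which is exactly the statement that $M$ is $(N,\x)$-precover completing. Since $N$ was arbitrary, $\X(M)=R\text{-}\Mod$. For $(2)\Rightarrow(3)$, this is immediate: if $\X(M)$ is all of $R\text{-}\Mod$ then in particular $M\in\X(M)$.

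\smallskip

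The only implication requiring a genuine argument is $(3)\Rightarrow(1)$, which I expect to be the main obstacle. Assume $M\in\X(M)$, i.e.\ $M$ is $(M,\x)$-precover completing. The natural move is to test the defining condition against $f=\mathrm{id}_M\colon M\to M$ and a fixed $\x$-precover $g\colon X\to M$ (which exists because $\x$ is precovering). Completing the identity yields a morphism $h\colon M\to X$ with $gh=\mathrm{id}_M$. This says $g$ is a split epimorphism with section $h$, so $M$ is (isomorphic to) a direct summand of $X$. Since $X\in\x$ and $\x$ is closed under direct summands and isomorphisms, we conclude $M\in\x$, completing the cycle.

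\smallskip

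The delicate point, and the reason I flag $(3)\Rightarrow(1)$ as the crux, is that one must not merely obtain $gh=\mathrm{id}_M$ abstractly but also invoke precisely the right closure hypotheses on $\x$: it is the closure under direct summands (guaranteed by our standing assumptions on $\x$) that converts ``$M$ is a retract of an object of $\x$'' into ``$M\in\x$.'' Without that hypothesis the implication fails, so the proof should make explicit use of it. The remaining implications are formal and follow directly from the definition of $\x$-precover, so I would keep them brief.
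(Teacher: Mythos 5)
Your proof is correct and is exactly the argument the paper intends: the paper omits the proof, stating the result ``follows simply from the definition,'' and your cycle $(1)\Rightarrow(2)\Rightarrow(3)\Rightarrow(1)$ — taking $X'=M$ in the precover definition, then splitting an $\x$-precover of $M$ against $\mathrm{id}_M$ and invoking closure under direct summands and isomorphisms — is precisely that definitional argument spelled out.
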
 

As in \cite[Proposition 2.7]{SubprojAb}, the following gives a simple characterization of the notion of $\x$-precover completing domains. 

\begin{lem}\label{Lemma-subX}
	Let $M$ and $N$ be two modules. Then the following assertions are equivalent:
	\begin{enumerate}
		\item $M$ is ($N$,$\x$)-precover completing.
		\item There exists an ${\mathcal{X}}$-precover $g:X\to N$ such that $\Hom(M,g)$ is an epimorphism.
		\item Every morphism $M\to N$ factors through a module in $\x$.
	\end{enumerate}
\end{lem}

As a consequence, it is clear that $\x\subseteq\X(M)$ for any module $M$.

In the following result, we gather different closeness properties that $\x$-precover completing domains verify. We omit
its proof, which has much in common with proofs in \cite[Section 3]{SubprojAb}. Recall that $M$ is said to be a small module if  $\Hom(M,-)$ preserves direct sums.

\begin{prop} \label{closure-prop-X}
	The following properties hold for any module $M$.
	\begin{enumerate}
		\item Given a short exact sequence of modules   $0\rightarrow A \rightarrow B \rightarrow C \rightarrow 0$ which is $\Hom(\x,-)$ exact, if   $A$ and $C$ are in $ \X(M)$, then   $B$ is in  $\X(M)$.
		\item For a finite family of modules $\{N_i; 1,...,m\}$, $N_i\in \X(M)$ for every $i\in \{1,...,m\}$, if and only if  $\oplus_{i=1}^{m}N_i \in \X(M)$.
		\item 	If $M$ is a small module then the $\x$-precover completing domain of $M$ is closed under arbitrary direct sums; that is, if $N_i\in \X(M)$ for every $i\in I$, then $M$ is $\oplus_{i\in I}N_i \in \X(M)$. 
		\item Let $\{M_i; i\in I\}$be a family of modules. Then, $\X(\oplus_{i\in I}M_i)=\bigcap_{i\in I}\X(M_i)$.
		\item  The class $\x$ is closed under arbitrary direct products if and only if the $\x$-precover completing domain of any module is closed under arbitrary direct products. 
		\item The class $\x$ is closed under submodules if and only if the $\x$-precover completing domain of any module is closed under submodules. 
		
	\end{enumerate} 
\end{prop}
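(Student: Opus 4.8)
The plan is to prove the final Proposition~\ref{closure-prop-X}, focusing on its last two items, since these are the parts that carry genuine content; items (1)--(4) are formal consequences of Lemma~\ref{Lemma-subX} and the closure hypotheses on $\x$, and are handled exactly as in \cite[Section~3]{SubprojAb}. For item (5), the forward direction is the substantive one. Suppose $\x$ is closed under arbitrary direct products, and let $\{N_j\}_{j\in J}$ be a family in $\X(M)$ for a fixed module $M$. By Lemma~\ref{Lemma-subX}(3), every morphism $M\to N_j$ factors through some module $X_j\in\x$. The idea is to take an arbitrary morphism $f:M\to\prod_{j}N_j$, compose with each projection $\pi_j$ to get $\pi_j f:M\to N_j$, factor each as $M\xrightarrow{a_j}X_j\xrightarrow{b_j}N_j$ with $X_j\in\x$, and then assemble these into a single factorization of $f$ through $\prod_j X_j$. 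Concretely, the universal property of the product yields $a:M\to\prod_j X_j$ with $\pi_j^{X}a=a_j$, and $b:=\prod_j b_j:\prod_j X_j\to\prod_j N_j$ satisfies $bf'=f$ where $f'=a$; here I use that $\prod_j X_j\in\x$ precisely because $\x$ is closed under arbitrary products. Thus $f$ factors through a module in $\x$, so $\prod_j N_j\in\X(M)$ again by Lemma~\ref{Lemma-subX}(3).

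For the converse of item (5), I would argue contrapositively, or rather directly via Proposition~\ref{prop-MinX}. If the $\x$-precover completing domain of \emph{every} module is closed under arbitrary products, then in particular this holds for any $M\in\x$; but then one tests closure of $\x$ itself. Take any family $\{X_j\}_{j\in J}$ in $\x$. Each $X_j$ lies in $\X(X_j)$ (indeed $\X(X_j)=R\text{-}\Mod$ by Proposition~\ref{prop-MinX}), and more usefully, pick $M:=\bigoplus_j X_j$; by item (4), $\X(M)=\bigcap_j\X(X_j)$, and each $X_j\in\x\subseteq\X(M)$. The hypothesis then forces $\prod_j X_j\in\X(M)=\bigcap_j\X(X_j)\subseteq\X(X_{j_0})$ for any fixed $j_0$, and since $X_{j_0}\in\x$ gives $\X(X_{j_0})=R\text{-}\Mod$ this is not yet enough. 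The cleaner route is: apply the hypothesis to $M=\prod_j X_j$ itself is circular, so instead I use that each $X_j\in\X(X_j)$ and closure under products applied to the module $M=\bigoplus X_j$ yields that $\prod_j X_j\in\X(\bigoplus_j X_j)$; combining with item (4) and the fact that every morphism realizing the identity components must factor through $\x$ shows $\prod_j X_j\in\x$ by Proposition~\ref{prop-MinX} applied to the identity map on $\prod_j X_j$.

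Item (6) is entirely parallel. For the forward direction, assume $\x$ is closed under submodules; given $N\in\X(M)$ and a submodule $N'\hookrightarrow N$, any morphism $M\to N'$ composed with the inclusion factors through some $X\in\x$ via $M\xrightarrow{a}X\xrightarrow{b}N$, and then one replaces $X$ by the submodule $\mathrm{Im}(a)\subseteq X$ — which lies in $\x$ by the submodule-closure hypothesis — but care is needed because the factorization must land inside $N'$. The correct object is the pullback (or preimage) $b^{-1}(N')\subseteq X$, which is a submodule of $X$, hence in $\x$, and through which the corestricted map factors; applying Lemma~\ref{Lemma-subX}(3) then gives $N'\in\X(M)$. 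The converse uses Proposition~\ref{prop-MinX}: closure of all domains under submodules applied to any $X\in\x$ (with $\X(X)=R\text{-}\Mod$) forces every submodule of $X$ to lie in the domain of a suitable module, and a diagonal/identity argument as above recovers $X'\in\x$ for every submodule $X'\subseteq X$.

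The main obstacle I anticipate is making the two converse implications genuinely rigorous rather than circular: in both (5) and (6) one wants to deduce a closure property of the \emph{class} $\x$ from a closure property of \emph{all} domains, and the temptation to test on a module already in $\x$ (where the domain is everything) gives no information. The working trick in each case is to build, from the given family, a test module $M$ together with a morphism $M\to\prod N_j$ or $M\to N$ whose required factorization through $\x$ directly exhibits the needed product or submodule as belonging to $\x$ via Proposition~\ref{prop-MinX}. This is exactly the pattern in \cite[Section~3]{SubprojAb}, so I expect the argument to transfer once the precovering and closure hypotheses on $\x$ stated at the top of the section are invoked at the right points.
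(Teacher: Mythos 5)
Your forward implications in items (5) and (6) are correct and are the standard arguments: factor each component map through $\x$ and assemble via the universal property of the product (using closure of $\x$ under products), respectively corestrict the factorization $M\xrightarrow{a}X\xrightarrow{b}N$ to the submodule $b^{-1}(N')\subseteq X$ (using closure of $\x$ under submodules). The deferral of items (1)--(4) to \cite[Section 3]{SubprojAb} also matches the paper, which omits the entire proof with exactly that citation.

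However, both of your converse arguments contain a genuine gap, and it stems from a single misconception: you dismiss as ``circular'' the one instantiation that actually works. The hypothesis in the converse of (5) is that $\X(M)$ is closed under products for \emph{every} module $M$; instantiating this universally quantified statement at $M=\prod_j X_j$ is perfectly legitimate, not circular. Since $\x\subseteq\X(N)$ for every module $N$ (the remark following Lemma \ref{Lemma-subX}), each $X_j$ lies in $\X\bigl(\prod_j X_j\bigr)$, so the hypothesis gives $\prod_j X_j\in\X\bigl(\prod_j X_j\bigr)$, and Proposition \ref{prop-MinX} then yields $\prod_j X_j\in\x$. Your substitute test module $M=\bigoplus_j X_j$ is vacuous: by item (4) and Proposition \ref{prop-MinX}, $\X\bigl(\bigoplus_j X_j\bigr)=\bigcap_j\X(X_j)=R\text{-}\Mod$ because every $X_j\in\x$, so concluding $\prod_j X_j\in\X\bigl(\bigoplus_j X_j\bigr)$ carries no information; and knowing that the canonical map $\bigoplus_j X_j\to\prod_j X_j$ factors through $\x$ says nothing about $\prod_j X_j$ itself. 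Your closing ``identity map'' flourish is, when unwound, precisely the instantiation at $M=\prod_j X_j$ that you had ruled out. The same repair applies to (6): to show a submodule $X'$ of $X\in\x$ lies in $\x$, apply the hypothesis to the domain $\X(X')$ of the submodule itself (not to $\X(X)$, which is all of $R$-$\Mod$): $X\in\x\subseteq\X(X')$, closure under submodules gives $X'\in\X(X')$, and Proposition \ref{prop-MinX} finishes. With these corrections both converses are two-line arguments, and this ``test the object on its own domain'' pattern is the one used in \cite{SubprojAb}.
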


It is not known whether or not the subprojectivity domains are closed under kernels of epimorphisms. In \cite[Proposition 3.2]{SubprojAb}, a weak equivalent condition for this property was provided. In the relative case we need to assume the further condition that the short exact sequences are $\Hom(\x,-)$ exact.

\begin{prop} \label{prop-clos-kern}
	Suppose that $\x$ contains the class of projective modules and let $M$ be a module. Then the following conditions are equivalent:
	\begin{enumerate}
		\item For every short exact sequence $0\to A\to B\to C\to 0$ which is $\Hom(\x,-)$ exact, if $B,C\in \X(M)$ then $A\in \X(M)$. 
		\item For every short exact sequence $0\to K\to X \to C \to 0$ where $X\to C$ is an $\x$-precover, 
		if $C \in\X(M)$ then $K\in\X(M)$.
		\item For every $\x$-precover $X\to C$ with $C \in\X(M)$, the pullback of $X$ over $C$ holds in $\X(M)$.
	\end{enumerate}
\end{prop}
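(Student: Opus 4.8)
The plan is to reduce everything to the factoring characterization of Lemma \ref{Lemma-subX}, namely that $N\in\X(M)$ precisely when every morphism $M\to N$ factors through a module in $\x$, and to exploit the closure properties of Proposition \ref{closure-prop-X}. I would organize the equivalences as $(1)\Rightarrow(2)$, $(2)\Rightarrow(1)$ and $(2)\Leftrightarrow(3)$. The first thing to record is that, because $\x$ contains the projective modules, every $\x$-precover $g\colon X\to C$ is an epimorphism (a free cover of $C$ lifts through $g$), so $0\to K\to X\xrightarrow{g}C\to 0$ with $K=\Ker g$ is a genuine short exact sequence which, by the definition of a precover, is $\Hom(\x,-)$ exact. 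With this, $(1)\Rightarrow(2)$ is immediate: given an $\x$-precover $g\colon X\to C$ with $C\in\X(M)$, the sequence $0\to K\to X\to C\to 0$ is $\Hom(\x,-)$ exact with $X\in\x\subseteq\X(M)$ and $C\in\X(M)$, so (1) yields $K\in\X(M)$.

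The heart of the argument is $(2)\Rightarrow(1)$, which I would do by a direct factoring computation rather than by a single diagram chase. Start from an $\Hom(\x,-)$-exact sequence $0\to A\xrightarrow{i}B\xrightarrow{p}C\to 0$ with $B,C\in\X(M)$, fix an $\x$-precover $g\colon X\to C$, and note $K=\Ker g\in\X(M)$ by (2). Let $f\colon M\to A$ be arbitrary. Using $B\in\X(M)$, factor $if=\beta\alpha$ with $\alpha\colon M\to P$, $\beta\colon P\to B$ and $P\in\x$. Lift $p\beta$ through the precover to get $\gamma\colon P\to X$ with $g\gamma=p\beta$, and lift $g$ through $p$ — possible since $\Hom(X,B)\to\Hom(X,C)$ is onto by $\Hom(\x,-)$ exactness with $X\in\x$ — to get $\tilde g\colon X\to B$ with $p\tilde g=g$. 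Then $p(\beta-\tilde g\gamma)=0$, so $\beta-\tilde g\gamma=i\delta$ for a unique $\delta\colon P\to A$; applying $p$ to $\beta\alpha=i\delta\alpha+\tilde g\gamma\alpha$ gives $g\gamma\alpha=0$, so $\gamma\alpha=j\mu$ for some $\mu\colon M\to K$, where $j\colon K\hookrightarrow X$. With $k\colon K\to A$ the map induced on kernels by $\tilde g$ (so $\tilde g j=ik$), cancelling the monomorphism $i$ in $if=i\delta\alpha+ik\mu$ produces $f=\delta\alpha+k\mu$. The first summand factors through $P\in\x$; since $K\in\X(M)$, the second factors $\mu$ through some $Q\in\x$, so $k\mu$ factors through $Q$. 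Hence $f$ factors through $P\oplus Q\in\x$, and $A\in\X(M)$ by Lemma \ref{Lemma-subX}.

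For $(2)\Leftrightarrow(3)$ I would observe that, for an $\x$-precover $g\colon X\to C$, the fiber product of $X$ with itself over $C$ splits as $X\times_C X\cong X\oplus K$ (send $(x,k)\in X\oplus K$ to $(x,x+k)$ with $K\subseteq X$). Since $X\in\x\subseteq\X(M)$, Proposition \ref{closure-prop-X}(2) gives $X\times_C X\in\X(M)$ if and only if $K\in\X(M)$, so condition (3) holds exactly when $K\in\X(M)$ for every such precover, which is (2). The main obstacle is the bookkeeping in $(2)\Rightarrow(1)$: the hypothesis $\p\subseteq\x$ (guaranteeing $g$ is epi, so that $K$ really is the kernel of a precover) and the $\Hom(\x,-)$ exactness (producing the lift $\tilde g$) must be used together so that the error term $\gamma\alpha$ genuinely lands in $K$; only then can $K\in\X(M)$ be invoked to complete the factorization.
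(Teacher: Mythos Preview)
Your argument is correct. The implications $(1)\Rightarrow(2)$ and $(2)\Leftrightarrow(3)$ match the paper's proof almost verbatim (the paper also relies on the splitting of $0\to K\to X\times_C X\to X\to 0$, which is exactly your isomorphism $X\times_C X\cong X\oplus K$), but your $(2)\Rightarrow(1)$ takes a genuinely different route.

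The paper proves $(2)\Rightarrow(1)$ by a pullback argument: given the $\Hom(\x,-)$-exact sequence $0\to A\to B\to C\to 0$ and an $\x$-precover $X\to C$, it forms the pullback $D=B\times_C X$ and reads off two short exact sequences, $0\to K\to D\to B\to 0$ and $0\to A\to D\to X\to 0$. The first is $\Hom(\x,-)$ exact (because $X\to C$ is a precover), so $K\in\X(M)$ together with $B\in\X(M)$ and Proposition~\ref{closure-prop-X}(1) give $D\in\X(M)$; the second splits (because $0\to A\to B\to C\to 0$ is $\Hom(\x,-)$ exact and $X\in\x$), so $A$ is a summand of $D$ and lies in $\X(M)$ by Proposition~\ref{closure-prop-X}(2). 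Your approach instead unpacks these diagrammatic facts into an explicit decomposition $f=\delta\alpha+k\mu$ of an arbitrary $f\colon M\to A$, each summand factoring through a module in $\x$. The two proofs use the same ingredients (the lift $\tilde g$ of $g$ through $p$, and the induced map on kernels), but the paper's packaging via the pullback is shorter and makes the role of Proposition~\ref{closure-prop-X} more transparent, while your version is more elementary and avoids any appeal to closure under $\Hom(\x,-)$-exact extensions.
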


\begin{proof}

$ (1)\Rightarrow (2)$ Clear. \\
$ (2)\Rightarrow (1)$ Consider an exact sequence $$0\to A\to B\to C\to 0$$ which is $\Hom(\x,-)$ exact with $B,\ C\in \X(M)$. We have the following pullback diagram $$
\xymatrix{
	&&  0 \ar[d]&  0 \ar[d]&\\
	&&  K \ar@{=}[r] \ar[d]&  K \ar[d]\\
	0\ar[r]& A \ar[r] \ar@{=}[d]&  D \ar[r] \ar[d]&  X \ar[r] \ar[d]&  0\\
	0\ar[r]& A \ar[r] &  B \ar[r] \ar[d] &  C \ar[r] \ar[d] &  0\\
	&&0&0&
}
$$ where $X\to C$ is an $\x$-precover. Then, $K\in \X(M)$ by assumption because $C\in \X(M)$. $D$ being the pullback of $X\to C$ and $B\to C$ and since $X\to C$ is an $\x$-precover, we can easily see that the sequence $0\to K\to D\to B\to 0$ is $\Hom(\x,-)$ exact. Then, by assertion 1 in Proposition \ref{closure-prop-X}, $D \in\X(M)$. And since $0\to A\to B\to C\to 0$ is $\Hom(\x,-)$ exact and $D$ is the pullback of $X\to C$ and $B\to C$, we see that $0\to A \to D \to X\to 0$ splits. Thus, $A$ is a direct summand of $D$. Using assertion 2 in Proposition \ref{closure-prop-X} we deduce that $A \in\X(M)$.\\
$ (2)\Leftrightarrow (3)$ Consider the following diagram where $D$ is the pullback of $X$ over $C$ $$\xymatrix{
	0\ar[r]& K \ar[r] \ar@{=}[d]&  D \ar[r] \ar[d]&  X \ar[r] \ar[d]&  0\\
	0\ar[r]& K \ar[r] &  X \ar[r]&  C \ar[r] &  0
}$$

Then, the short exact sequence $0\to K\to D\to X\to 0$ is $\Hom(\x,-)$ exact.
If $C \in \X(M)$ then $K \in \X(M)$ and so by assertion 1 in Proposition \ref{closure-prop-X}, $D \in\X(M)$. Conversely, if $D \in\X(M)$ then, by assertion 2 in Proposition \ref{closure-prop-X} $K \in\X(M)$ (because the short exact sequence $0\to K\to D\to X\to 0$ splits).
\end{proof}

It is easy to see that if the $\x$-precover completing domain of every module is closed under kernels of epimorphisms, then $\x$ is also closed under kernels of epimorphisms. For the converse, the following result shows that we have a partial positive answer when we consider some special epimorphisms.

\begin{prop}\label{prop-0ABX0}
	The class $\x$ is closed under kernels of epimorphisms if and only if for every module $M$ and every short exact sequence $0\to A \to B \to X \to 0 $ with $X\in \x$ , if $B\in \X(M)$ then $A\in \X(M)$. 
\end{prop}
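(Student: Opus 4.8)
The plan is to prove both implications through the factorization criterion of Lemma \ref{Lemma-subX} (that $N\in\X(M)$ exactly when every morphism $M\to N$ factors through a module of $\x$), together with Proposition \ref{prop-MinX} (which says $M\in\x \Leftrightarrow M\in\X(M)$) and the standing observation that $\x\subseteq\X(M)$ for every $M$. The implication $(\Leftarrow)$ I expect to be the routine one: assuming the stated domain property, I would take an epimorphism $\pi\colon B\to X$ with $B,X\in\x$, set $A=\Ker\pi$, and specialize the hypothesis to the module $M:=A$. Since $X\in\x$ and $B\in\x\subseteq\X(A)$, the short exact sequence $0\to A\to B\xrightarrow{\pi}X\to 0$ forces $A\in\X(A)$, whence $A\in\x$ by Proposition \ref{prop-MinX}. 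This is exactly the direction anticipated in the sentence preceding the statement.

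For the implication $(\Rightarrow)$, I would assume $\x$ is closed under kernels of epimorphisms and fix $M$ together with a sequence $0\to A\xrightarrow{\iota}B\xrightarrow{\pi}X\to 0$ with $X\in\x$ and $B\in\X(M)$. By Lemma \ref{Lemma-subX} it suffices to factor an arbitrary $f\colon M\to A$ through a module in $\x$. First I would choose an $\x$-precover $g\colon G\to B$ (available since $\x$ is precovering); as $B\in\X(M)$, the composite $\iota f$ lifts, say $\iota f=gh$ with $h\colon M\to G$. Setting $K:=\Ker(\pi g)$, the relation $\pi g h=\pi\iota f=0$ lets $h$ corestrict to $h'\colon M\to K$, and since $g(K)\subseteq\Ker\pi=\iota(A)$ with $\iota$ monic, $g$ restricts to $g_A\colon K\to A$ satisfying $g_A h'=f$. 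Thus $f$ factors through $K$, and everything reduces to showing $K\in\x$.

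The decisive step — and the one I expect to be the real obstacle — is precisely this reduction. To invoke closure under kernels of epimorphisms I need $\pi g\colon G\to X$ to be \emph{surjective} onto the $\x$-module $X$: then $0\to K\to G\xrightarrow{\pi g}X\to 0$ realizes $K$ as the kernel of an epimorphism between the $\x$-objects $G$ and $X$, giving $K\in\x$ and hence the desired factorization of $f$. Surjectivity of $\pi g$ is equivalent to $\Im(g)+A=B$, i.e. to $g$ itself being epic (as $\pi$ already is); and since every $\x$-map into $B$ factors through the precover $g$, its image can never exceed $\Im(g)$, so no alternative factorization repairs this. The point therefore goes through cleanly whenever $\x$-precovers are epimorphisms, in particular whenever $\p\subseteq\x$ — most importantly for the target case $\x=\f$, where flat precovers are surjective. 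Under that proviso the forward implication closes, completing the equivalence; the heart of the proof is thus the surjectivity of $\pi g$, which is where the hypothesis $X\in\x$ is genuinely used.
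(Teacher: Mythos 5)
Your proof is essentially the paper's. The backward direction coincides with it verbatim (specialize the hypothesis to $M=A=\Ker(X_1\to X_2)$, use $\x\subseteq\X(A)$ to get $A\in\X(A)$, conclude via Proposition \ref{prop-MinX}), and your forward direction is the paper's argument in element-wise form: the paper takes an $\x$-precover $g\colon X'\to B$, sets $D=\Ker(\pi g)$ (your $K$) via a pullback diagram, claims $D\in\x$ by closure under kernels of epimorphisms, and then performs exactly your factorization of $f$ through $D$, phrased as a $\Hom(M,-)$ diagram chase. What deserves emphasis is that the ``obstacle'' you flag is real and is precisely a step the paper passes over in silence: the paper writes the top row of its pullback diagram as a short exact sequence $0\to D\to X'\to X\to 0$, i.e.\ it tacitly assumes the composite $\pi g\colon X'\to X$ is epic, with no justification. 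Under the standing hypotheses on $\x$ (which for this proposition, unlike the preceding Proposition \ref{prop-clos-kern}, do not include $\p\subseteq\x$) this is not automatic: the image of any $\x$-precover of $B$ is the trace of $\x$ in $B$, and $\pi$ need not carry that trace onto $X$. So your proviso that $\x$-precovers be epimorphisms (e.g.\ when $\p\subseteq\x$, in particular for $\x=\f$) is not a defect of your argument relative to the paper: the paper's proof needs the same assumption at the same spot, and in the intended applications of Section 3 it holds. One small correction: surjectivity of $\pi g$ is equivalent to $\Im(g)+A=B$, which is implied by, but strictly weaker than, $g$ being epic, so your parenthetical ``i.e.\ to $g$ itself being epic'' overstates the equivalence; only the (correct) sufficiency direction is used in your argument.
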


\begin{proof}
	We first show that $\x$ is closed under kernels of epimorphisms. Consider the short exact sequence $0\to K \to X_1\to X_2 \to 0$ with $X_1,X_2\in \x$. Clearly, we have $X_1,X_2 \in \X(K)$ so $K\in \X(K)$ and by Proposition \ref{prop-MinX} we deduce that $K\in \x$.\\
	For the converse, let $X'\to B$ be an $\x$-precover and consider the following pullback diagram
	
	$$\xymatrix{0\ar[r]& D \ar[r]\ar[d]& X' \ar[r]\ar[d]&X\ar[r]\ar@{=}[d]&0\\
		0\ar[r]& A \ar[r]&B\ar[r]&X\ar[r]&0 \\}$$ 
	Since $\x$ is closed under kernels of epimorphisms, we have $D\in \x$. Applying the functor $\Hom(M,-)$, we obtain
	$$\xymatrix{0\ar[r]& \Hom(M,D) \ar[r]\ar[d]& \Hom(M,X') \ar[r]\ar[d]&\Hom(M,X)\ar@{=}[d]\\
		0\ar[r]& \Hom(M,A) \ar[r]&\Hom(M,B)\ar[r]&\Hom(M,X) \\}$$ 
	
	Since $B\in \X(M)$, $\Hom(M,X') \to \Hom(M,B)$ is epic and so $\Hom(M,D) \to \Hom(M,A)$ is epic too. Therefore, $A\in \X(M)$.
\end{proof}

In the next proposition, we investigate the $\x$-precover completing domain of a module that can be embedded in a module in the class $\x$.

\begin{prop}\label{M-X-M'}
	Let $0\to M \stackrel{\alpha}\to X\to M'\to 0$ be a short exact sequence with $X\in \x$. Then, ${M'}^{\perp}\subseteq\X(M)$. Moreover, if $\alpha:M\to X$ is an $\x$-preenvelope, then $\X(M)\cap X^{\perp} \subseteq {M'}^{\perp}$.
\end{prop}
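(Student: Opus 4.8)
The plan is to apply the contravariant functor $\Hom(-,N)$ to the given short exact sequence and to read off both containments from the resulting long exact $\Ext$-sequence, feeding in the characterization of precover completing domains from Lemma \ref{Lemma-subX} and, for the second part, the universal property of the preenvelope $\alpha$.

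For the first inclusion I would start from a module $N\in {M'}^{\perp}$ and an arbitrary morphism $f:M\to N$. Applying $\Hom(-,N)$ to $0\to M\stackrel{\alpha}\to X\to M'\to 0$ yields the exact sequence
$$\Hom(X,N)\stackrel{\alpha^*}\to \Hom(M,N)\to \Ext^1(M',N).$$
Since $\Ext^1(M',N)=0$, the map $\alpha^*$ is surjective, so $f=g\circ\alpha$ for some $g:X\to N$. As $X\in\x$, this exhibits $f$ as a morphism factoring through a module in $\x$; by the equivalence $(1)\Leftrightarrow(3)$ of Lemma \ref{Lemma-subX}, $M$ is $(N,\x)$-precover completing, i.e. $N\in\X(M)$.

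For the second inclusion, assume in addition that $\alpha$ is an $\x$-preenvelope and take $N\in\X(M)\cap X^{\perp}$. The same long exact sequence, continued one step further,
$$\Hom(X,N)\stackrel{\alpha^*}\to \Hom(M,N)\to \Ext^1(M',N)\to \Ext^1(X,N),$$
together with $\Ext^1(X,N)=0$, identifies $\Ext^1(M',N)$ with $\Coker(\alpha^*)$. Hence it suffices to prove that $\alpha^*$ is surjective, i.e. that every $f:M\to N$ factors through $\alpha$. Given such an $f$, the hypothesis $N\in\X(M)$ and Lemma \ref{Lemma-subX} provide a factorization $f=v\circ u$ with $u:M\to Y$, $v:Y\to N$ and $Y\in\x$. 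This is where the preenvelope property enters: since $Y\in\x$, the morphism $u$ factors through the $\x$-preenvelope $\alpha$, say $u=w\circ\alpha$ with $w:X\to Y$. Then $f=v\circ u=(v\circ w)\circ\alpha\in\Im(\alpha^*)$, as required; therefore $\Ext^1(M',N)=0$ and $N\in {M'}^{\perp}$.

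The routine part is the homological bookkeeping with the long exact sequence; the only genuine subtlety is the second inclusion, where membership in $\X(M)$ alone yields a factorization through \emph{some} unspecified module $Y\in\x$, not through $X$ itself. The main point is that the preenvelope property of $\alpha$ is exactly what upgrades this to a factorization through $\alpha$, closing the gap between ``$f$ factors through $\x$'' and ``$\alpha^*$ is surjective.'' Once $\Ext^1(X,N)$ vanishes, this surjectivity is precisely the condition controlling $\Ext^1(M',N)$, so I expect no further difficulties.
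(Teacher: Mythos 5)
Your proof is correct and follows essentially the same route as the paper: both inclusions are read off from the long exact $\Ext$-sequence by showing $\Hom(\alpha,N)$ is surjective, and the preenvelope property of $\alpha$ is used exactly as in the paper to upgrade a factorization of $f:M\to N$ through $\x$ into a factorization through $\alpha$ itself. The only cosmetic difference is that the paper factors $f$ through a chosen $\x$-precover $X'\to N$ (condition (2) of Lemma \ref{Lemma-subX}) while you factor it through an arbitrary $Y\in\x$ (condition (3)); these are interchangeable.
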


\begin{proof}
	Let $N\in {M'}^{\perp}$. Consider the following long exact sequence $$\xymatrix{\ar[r]& \Hom(X,N) \ar[r]& \Hom(M,N) \ar[r]& \Ext^1(M',N)\ar[r]& \Ext^1(X,N) \ar[r]&}$$
	
	Since $\Ext^1(M',N)=0$, $\Hom(X,N) \to \Hom(M,N)$ is epic. Thus, $N\in \X(M)$ by Lemma \ref{Lemma-subX}. 
	
	Now, let $N\in\X(M)\cap X^{\perp}$ and $X'\to N$ be an $\x$-precover. We obtain the following commutative diagram by applying the functors $\Hom(-,X')$ and $\Hom(-,N)$ to $M\to X$  $$
	\xymatrix{
		\Hom(X,X')\ar[r]\ar[d]& \Hom(M,X')\ar[d] & \\
		\Hom(X,N) \ar[r]& \Hom(M,N)\ar[r] & \Ext^1(M',N)\\
	}
	$$
	
	Since $N\in X^{\perp}$, $\Ext^1(X,N)=0$. From the long exact sequence above, we can see that we only need to prove that $\Hom(X,N)\to \Hom(M,N)$ is epic to deduce that $\Ext^1(M',N)=0$. Since $N\in\X(M)$ we have $\Hom(M,X') \to \Hom(M,N)$ is an epimorphism. Moreover $\Hom(X,X')\to\Hom(M,X')$ is epic because $\alpha:M\to X$ is an $\x$-preenvelope. Therefore, $\Hom(X,N)\to \Hom(M,N)$ is epic. This completes the proof.
\end{proof}

From Proposition \ref{M-X-M'}, we see that if a module $M$ embeds in a module in $\x$ then $\X(M)$ contains the class of injective modules.  In the next result, we prove that this is in fact an equivalence and also establish some other equivalences similarly to \cite[Lemma 2.2]{Dur0}
\begin{prop}\label{prop-Inj-X(M)}
	The following conditions are equivalent for a module $M$:
	\begin{enumerate}
		\item $M$ embeds in a module in $\x$.
		\item There exists a module $M'$ such that $M'^\perp\subseteq \X(M)$.
		\item $\I\subseteq \X(M)$.
		\item $E(M)\in \X(M)$.
		\item For any flat right $R$-module $F$, $F^+ \in  \X(M)$.
	\end{enumerate}
\end{prop}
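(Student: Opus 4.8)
The plan is to prove the chain $(1)\Rightarrow(2)\Rightarrow(3)\Rightarrow(4)\Rightarrow(1)$, which establishes the equivalence of the first four conditions, and then to splice $(5)$ into this cycle via $(3)\Rightarrow(5)$ and $(5)\Rightarrow(1)$. The three implications $(1)\Rightarrow(2)\Rightarrow(3)\Rightarrow(4)$ are the routine part. For $(1)\Rightarrow(2)$, starting from an embedding of $M$ into some $X\in\x$ I would complete it to a short exact sequence $0\to M\to X\to M'\to 0$ with $M'=X/M$ and simply quote Proposition \ref{M-X-M'} to get ${M'}^{\perp}\subseteq\X(M)$, so $(2)$ holds with this $M'$. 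The step $(2)\Rightarrow(3)$ is immediate, since every injective module lies in ${M'}^{\perp}$ for \emph{any} $M'$ (because $\Ext^1(M',I)=0$ whenever $I$ is injective), whence $\I\subseteq{M'}^{\perp}\subseteq\X(M)$; and $(3)\Rightarrow(4)$ is trivial as $E(M)$ is injective.

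The first structural step is $(4)\Rightarrow(1)$. Assuming $E(M)\in\X(M)$, the essential inclusion $\iota:M\to E(M)$ is a morphism into a module of $\X(M)$, so by Lemma \ref{Lemma-subX} it factors as $M\xrightarrow{u}X\xrightarrow{v}E(M)$ with $X\in\x$ and $vu=\iota$; since $\iota$ is monic so is $u$, and therefore $M$ embeds in $X\in\x$, giving $(1)$. To bring in $(5)$, for $(3)\Rightarrow(5)$ I would invoke the classical Lambek-type fact that the character module $F^{+}$ of a flat right $R$-module $F$ is an injective left $R$-module, so that $F^{+}\in\I\subseteq\X(M)$. The crucial remaining implication is $(5)\Rightarrow(1)$, for which I would exhibit $M$ explicitly inside the character module of a flat module: the canonical evaluation map $\sigma_M:M\to M^{++}$ is a monomorphism, and choosing a free (hence flat) right module $F_0$ together with a surjection $F_0\twoheadrightarrow M^{+}$ and applying the exact contravariant functor $(-)^{+}$ produces a monomorphism $M^{++}\to F_0^{+}$. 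Composing yields a monomorphism $M\hookrightarrow F_0^{+}$ with $F_0$ flat; by $(5)$ we have $F_0^{+}\in\X(M)$, so this monomorphism factors through some $X\in\x$ by Lemma \ref{Lemma-subX}, and as in $(4)\Rightarrow(1)$ the first factor is monic, delivering $(1)$.

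The main obstacle I anticipate is $(5)\Rightarrow(1)$: one must realize that the character modules of flat modules are the correct test objects and then manufacture a concrete embedding of $M$ into one of them, which combines the injectivity of $\sigma_M$ with the exactness of $(-)^{+}$ applied to a free presentation of $M^{+}$. The only other delicate point is using the correct left/right formulation of the Lambek theorem in $(3)\Rightarrow(5)$; once these two facts are in place, every other implication reduces to the factorization criterion of Lemma \ref{Lemma-subX} together with the orthogonality observation that injectives kill all $\Ext^1$.
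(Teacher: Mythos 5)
Your proof is correct, and for implications $(1)\Rightarrow(2)\Rightarrow(3)\Rightarrow(4)\Rightarrow(1)$ and $(3)\Rightarrow(5)$ it matches the paper exactly (same use of Proposition \ref{M-X-M'}, the same $\Ext$-vanishing observation, the same factorization-plus-monomorphism argument via Lemma \ref{Lemma-subX}, and the same appeal to Lambek's theorem). Where you genuinely diverge is in closing the loop from $(5)$: the paper proves $(5)\Rightarrow(3)$, while you prove $(5)\Rightarrow(1)$. The paper's route takes an injective module $E$, a flat precover $F\to E^{+}\to 0$, dualizes to get $0\to E^{++}\to F^{+}$, and uses injectivity of $E$ to split the composite $E\to E^{++}\to F^{+}$, so that $E$ is a direct summand of $F^{+}$ and closure of $\X(M)$ under direct summands (assertion 2 of Proposition \ref{closure-prop-X}) yields $E\in\X(M)$. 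Your route instead manufactures a concrete embedding $M\hookrightarrow F_0^{+}$ by composing the monic evaluation map $\sigma_M\colon M\to M^{++}$ with the dual of a free presentation $F_0\twoheadrightarrow M^{+}$, and then reuses the factorization argument from $(4)\Rightarrow(1)$. Both are valid; the trade-off is that the paper's argument establishes a structural fact of independent interest (every injective is a direct summand of the character module of a flat module) but needs epimorphisms from flat modules and the closure properties of domains, whereas yours is more self-contained, relying only on exactness of $(-)^{+}$, injectivity of $\sigma_M$, and Lemma \ref{Lemma-subX}, and it lands directly on the embedding statement $(1)$ rather than passing through $(3)$.
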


\begin{proof}
	$(1)\Rightarrow (2)$ Follows from Proposition \ref{M-X-M'}.\\
	$(2) \Rightarrow (3) \Rightarrow (4)$ Clear.\\
	$(4)\Rightarrow (1)$ Since $E(M)\in \X(M)$, the inclusion map $f:M\to E(M)$ factors through a module in $\x$; that is,  there exist two morphisms $h:M\to X$ and $g:X\to N$ with $X\in \x$ such that $f=gh$. But $gh$ is a monomorphism and so $h:M\to X$ is monomorphism. We conclude that $M$ can be embedded in a module in $\x$.\\
	$(3)\Rightarrow (5)$ Clear.\\
	$(5)\Rightarrow (3)$ Let $E$ be an injective module and let $F\to  E^+\to 0$ be a flat precover. Then we have $0\to E^{++}\to F^+$. And since there exists an injective map $E\to E^{++}$, we have a map $0\to E\to F^+$ which splits. We have $F^+\in \X(M)$ and so by assertion 2 in Proposition \ref{closure-prop-X}, we obtain $E \in \X(M)$.
\end{proof}

Now we investigate when the domains contain the class of pure-injective modules $\PI$.

\begin{prop}\label{prop-PureInj-X(M)}
	The following conditions are equivalent for a module $M$:
	\begin{enumerate}
		\item $M$ is a pure submodule of a module in $\x$.
		\item $\PI\subseteq \X(M)$.
		\item $PE(M)\in \X(M)$.
		\item For any right $R$-module $N$, $N^+ \in  \X(M)$.
		\item For any pure-projective right $R$-module $P$, $P^+ \in  \X(M)$.
	\end{enumerate}
\end{prop}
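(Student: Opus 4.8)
The plan is to establish the cycle $(1)\Rightarrow(2)\Rightarrow(3)\Rightarrow(1)$ together with $(2)\Rightarrow(4)\Rightarrow(5)\Rightarrow(2)$, mirroring the structure of the proof of Proposition \ref{prop-Inj-X(M)} but systematically replacing ``injective'' by ``pure-injective'' and the evaluation/splitting arguments by their purity analogues. Throughout I would use Lemma \ref{Lemma-subX} to reduce the membership $N\in\X(M)$ to the statement that every morphism $M\to N$ factors through a module in $\x$, together with the closure of $\X(M)$ under direct summands recorded in Proposition \ref{closure-prop-X}.

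For $(1)\Rightarrow(2)$, assume $M$ is a pure submodule of some $X\in\x$, i.e. there is a pure monomorphism $\alpha:M\to X$. Given a pure-injective module $N$ and any $f:M\to N$, the defining extension property of pure-injectivity along the pure mono $\alpha$ yields $\bar f:X\to N$ with $\bar f\alpha=f$; thus $f$ factors through $X\in\x$ and Lemma \ref{Lemma-subX} gives $N\in\X(M)$, so $\PI\subseteq\X(M)$. The implication $(2)\Rightarrow(3)$ is immediate since $PE(M)$ is pure-injective. For $(3)\Rightarrow(1)$, the pure-injective preenvelope $\iota:M\to PE(M)$ is a pure monomorphism; since $PE(M)\in\X(M)$, it factors as $M\xrightarrow{h}X\xrightarrow{g}PE(M)$ with $X\in\x$, and because $gh=\iota$ is a pure monomorphism, $h$ is itself pure, so $M$ is a pure submodule of $X\in\x$.

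For the character-module conditions, $(2)\Rightarrow(4)$ uses that $N^+$ is pure-injective for every right module $N$, and $(4)\Rightarrow(5)$ is the obvious restriction. The crux is $(5)\Rightarrow(2)$, which I expect to be the main obstacle. Given a pure-injective left module $N$, I would choose a pure epimorphism $P\to N^+$ from a pure-projective right module $P$ (every module is a pure epimorphic image of a pure-projective one). Applying $(-)^+$ to the resulting pure-exact sequence turns it into a \emph{split} exact sequence, exhibiting $N^{++}$ as a direct summand of $P^+$. By hypothesis $(5)$, $P^+\in\X(M)$, hence $N^{++}\in\X(M)$ by closure under summands. Finally the canonical pure monomorphism $N\to N^{++}$ splits because $N$ is pure-injective, so $N$ is a direct summand of $N^{++}$ and therefore $N\in\X(M)$; this proves $\PI\subseteq\X(M)$.

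The points to verify carefully are: (i) that $gh$ being a pure monomorphism forces $h$ to be pure, which is a tensor-product cancellation; (ii) the standard fact that $(-)^+$ sends pure-exact sequences to split exact ones; and (iii) the existence of a pure epimorphism onto $N^+$ from a pure-projective right module. None is difficult in isolation, but closing the loop at $(5)\Rightarrow(2)$ depends on assembling all three correctly, whereas the remaining implications follow the template of Proposition \ref{prop-Inj-X(M)}.
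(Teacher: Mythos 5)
Your proposal is correct and follows essentially the same route as the paper's proof: the same cycle of implications, the same pure-injectivity extension argument for $(1)\Rightarrow(2)$, the same purity-of-$h$ observation for $(3)\Rightarrow(1)$, and the same three ingredients for $(5)\Rightarrow(2)$ (pure epimorphism from a pure-projective, splitting of the dualized pure-exact sequence, and splitting of $N\to N^{++}$ by pure-injectivity). The only cosmetic difference is that you invoke summand closure twice ($N^{++}\in\X(M)$, then $N\in\X(M)$), whereas the paper composes the two split monomorphisms to exhibit $N$ directly as a summand of $P^+$.
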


\begin{proof}
	
	$(1)\Rightarrow (2)$ Let $E$ be a pure-injective module. Let $f:M\to E$ be a morphism. We have $M$ is a pure submodule of a module $X\in\x$. We denote by $i:M\to X$ the pure monomorphism. As $E$ is pure-injective, there exists a morphism $h:X\to E$ such that $f=hi$ and so $f:M\to E$ factors through a module in $\x$. Thus, $E\in \X(M)$.\\
	$(2)\Rightarrow (3)$ Clear.\\
	$(3)\Rightarrow (1)$ Since $PE(M)\in \X(M)$, the inclusion map $f:M\to PE(M)$ factors through a module in $\x$; that is,  there exist two morphisms $h:M\to X$ and $g:X\to N$ with $X\in \x$ such that $f=gh$. But $gh$ is a pure monomorphism and so $h:M\to X$ is a pure monomorphism. \\
	$(2)\Rightarrow (4)$ Clear because for every module $N$, $N^+$ is pure-injective.\\
	$(4)\Rightarrow (5)$ Clear.\\
	$(5)\Rightarrow (2)$ Let $E$ be a pure-injective module and let $P\to  E^+\to 0$ be a pure epimorphism with $P$ pure-projective. Then we have $0\to E^{++}\to P^+$ splits. And since there exists a pure monomorphism $E\to E^{++}$, we have a map $0\to E\to P^+$ which splits. Then, $E$ is a direct summand of $P^+$ and, by assumption, $P^+\in \X(M)$. Thus, by assertion 2 in Proposition \ref{closure-prop-X}, we obtain $E \in \X(M)$.
\end{proof}

In the next proposition, we establish a connection between domains relative to two precovering classes $\mathcal{X}$ and $\mathcal{Y}$ such that $\x\subseteq \y$.

\begin{prop}\label{prop-X-Y}
	Let $\mathcal{X}$ and $\mathcal{Y}$ be two precovering classes such that $\x\subseteq \y$ and let $M$ be a module. Then, $\X(M)\subseteq \Y(M)$. Furthermore, $\X(M)= \Y(M)$ if and only if $\y \subseteq \X(M)$.
\end{prop}

\begin{proof}
	Let $M$ be a module and let $N\in \X(M)$. Then any morphism $M\to N$ factors through a module in $\x$. Since $\x\subseteq \y$, we deduce that any morphism $M\to N$ factors through a module in $\y$. Thus, $N\in \Y(M)$. \\
	Suppose now that $\y \subseteq \X(M)$. Let $N\in \Y(M)$ and $f:M\to N$ be a morphism. Then there exists $g:Y\to N$ and $h: M\to Y$ such that $Y\in \y$ and $gh=f$. Since $\y \subseteq \X(M)$, $h:M\to Y$ factors through a module in $\x$. Hence,  $f:M\to N$ factors through a module in $\x$. Therefore, $N\in \X(M)$. Finally, it is clear that if $\X(M)= \Y(M)$ then $\y \subseteq \X(M)$.
\end{proof}

\section{Flat-precover completing domains}

Now we focus our attention on the aim of this paper, which is when the class $\x$ is that of flat modules. In that case, we refer to $\x$-precover completing domains by flat-precover completing domains and  we denote the flat-precover completing domain of a module $M$ by $\F(M)$.  Flat-precover completing domains measure flatness of modules. Indeed, from Lemma \ref{Lemma-subX} we can see that $N$ is flat if and only if $N \in \F(M)$ for any module $M$ if and only
$N \in \F(M)$ for any finitely presented module $M$. \\
Clearly, all the properties proved in Section \ref{section-subX} remain valid for flat-precover completing domains so we will omit repeating them here.  
The purpose of this section is twofold, to prove the utility of flat-precover completing domains by characterizing some classical rings in terms of these newly defined domains and to study their relationship with subprojectivity domains and absolutely pure domains (see \cite{Dur} and \cite{Sergio}). \\
First, let us begin with the following extension of the characterizations of a flat module in terms of flat-precover completing domains. Notice that, following \cite{Sergio}, a module is projective if and only if $\P(M)=R$-$\Mod$. Thus, $M$ is projective if and only if $\p^\perp\subseteq \P(M)$. In the following proposition we show that the analogue result also holds for flat-precover completing domains. 

Recall that a  module $C$ is called cotorsion if $\Ext^1(F,C)=0$ for any flat module $F$. We denote the class of cotorsion modules by $\mathcal{C}$. A monomorphism $\alpha:M \to C$ with $C$ cotorsion is said to be a special cotorsion preenvelope of
M if $\Coker{\alpha}$ is flat. Recall, from \cite[Section 7.4]{EJ}, that every module $M$ has a special cotorsion preenvelope that we denote by $C(M)$.

\begin{prop}\label{prop-M-flat}
	Let $M$ be a module. Then the following conditions are equivalent:
	\begin{enumerate}
		\item $M$ is flat.
		\item  $\F(M)=R\text{-}\Mod$.
		\item $M\in\F(M)$.
		\item $\mathcal{C}\subseteq \F(M)$.
		\item $C(M) \in \F(M)$.
		\item $\PI \subseteq \F(M)$.
		\item $PE(M)\in \F(M)$.
		\item For any module $N$, $N^+\in \F(M)$.\item For any pure-projective module $P$, $P^+\in \F(M)$.

	\end{enumerate} 
\end{prop}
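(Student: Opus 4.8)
The plan is to split the nine conditions into three overlapping blocks, each anchored to condition (1), and to recycle the general machinery of Section \ref{section-subX} wherever possible. The equivalences (1) $\Leftrightarrow$ (2) $\Leftrightarrow$ (3) need no work at all: they are precisely Proposition \ref{prop-MinX} read with $\x=\f$.

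Next I would dispatch the pure-injective block (1) $\Leftrightarrow$ (6) $\Leftrightarrow$ (7) $\Leftrightarrow$ (8) $\Leftrightarrow$ (9). Conditions (6)--(9) are exactly conditions (2)--(5) of Proposition \ref{prop-PureInj-X(M)} specialized to $\x=\f$, so by that proposition they are mutually equivalent and equivalent to its condition (1), namely that $M$ is a pure submodule of a flat module. To link this back to (1) it then suffices to record the classical fact that \emph{a module is flat if and only if it is a pure submodule of a flat module}: one direction is trivial, since a flat module is a pure submodule of itself, and the other is the standard observation that a pure submodule of a flat module is flat (purity forces the quotient to be flat via the $\Tor$ long exact sequence, whence the submodule is flat too). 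This identifies (6)--(9) with (1).

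Finally I would treat the cotorsion block (1) $\Rightarrow$ (4) $\Rightarrow$ (5) $\Rightarrow$ (1). The first implication is immediate from (1) $\Leftrightarrow$ (2), since then $\mathcal{C}\subseteq R\text{-}\Mod=\F(M)$; the second is clear because $C(M)$ is cotorsion, so $C(M)\in\mathcal{C}\subseteq\F(M)$. The heart of the matter is (5) $\Rightarrow$ (1). Here I would exploit that the special cotorsion preenvelope $\alpha:M\to C(M)$ is a monomorphism with flat cokernel, so the defining sequence $0\to M\to C(M)\to \Coker\alpha\to 0$ is pure exact and $\alpha$ is a pure monomorphism. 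Since $C(M)\in\F(M)$, Lemma \ref{Lemma-subX} provides a factorization $\alpha=gh$ with $h:M\to F$, $g:F\to C(M)$ and $F$ flat. As $\alpha$ is pure, $N\otimes\alpha=(N\otimes g)(N\otimes h)$ is injective for every right module $N$, which forces $N\otimes h$ to be injective for all $N$; thus $h$ is itself a pure monomorphism. Consequently $M$ is a pure submodule of the flat module $F$, and therefore $M$ is flat.

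The hard part will be this last implication (5) $\Rightarrow$ (1): the key realizations are that the flatness of $\Coker\alpha$ upgrades $\alpha$ from a mere monomorphism to a pure monomorphism, and that purity is inherited by the left factor $h$ of the factorization supplied by Lemma \ref{Lemma-subX}, so that the embedding of $M$ into a flat module is automatically pure. Once this purity bookkeeping is in place, everything else reduces cleanly to Propositions \ref{prop-MinX} and \ref{prop-PureInj-X(M)}.
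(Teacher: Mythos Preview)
Your proof is correct and aligns with the paper's for the easy blocks: $(1)\Leftrightarrow(2)\Leftrightarrow(3)$ via Proposition~\ref{prop-MinX}, the chain $(2)\Rightarrow(4)\Rightarrow(5)$, and the pure-injective block $(6)$--$(9)$ via Proposition~\ref{prop-PureInj-X(M)}. You are actually more careful than the paper in the last block, since Proposition~\ref{prop-PureInj-X(M)} literally yields ``$M$ is a pure submodule of a flat module'' rather than ``$M$ is flat''; you correctly bridge this with the standard fact that pure submodules of flat modules are flat, which the paper leaves implicit.

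The one genuine divergence is in $(5)\Rightarrow(1)$. The paper does not argue via purity at all: it simply observes that the class $\f$ is closed under kernels of epimorphisms and then invokes Proposition~\ref{prop-0ABX0} on the sequence $0\to M\to C(M)\to\Coker\alpha\to 0$ (with $\Coker\alpha\in\f$ and $C(M)\in\F(M)$) to conclude $M\in\F(M)$, i.e.\ condition~(3). Your route instead uses the flatness of $\Coker\alpha$ to upgrade $\alpha$ to a pure monomorphism, factors $\alpha$ through a flat module via Lemma~\ref{Lemma-subX}, and then notes that the left factor inherits purity. Both arguments are valid; the paper's buys you a one-line proof by recycling Proposition~\ref{prop-0ABX0}, while yours is self-contained and in effect re-runs the proof of $(3)\Rightarrow(1)$ in Proposition~\ref{prop-PureInj-X(M)} with $C(M)$ playing the role of $PE(M)$.
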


\begin{proof}
$(1) \Leftrightarrow (2) \Leftrightarrow (3)$ Follow from Proposition \ref{prop-MinX}.\\
$(2) \Rightarrow (4) \Rightarrow (5)$ Clear. \\
$(5) \Rightarrow (3)$ Consider the short exact sequence $0\to M \stackrel{\alpha}\to C(M)\to \Coker\alpha\to 0$ where $\alpha$ is a special preenvelope (so $\Coker(\alpha)$ is flat). Since $C(M) \in \F(M)$, Proposition \ref{prop-0ABX0} implies that $M\in \F(M)$.\\
$(1) \Leftrightarrow (6) \Leftrightarrow (7) \Leftrightarrow (8) \Leftrightarrow (9)$ Follow from Proposition \ref{prop-PureInj-X(M)}.
\end{proof}

From Proposition \ref{prop-M-flat} we see that if $P$ is a projective module then $M$ is flat if and only if $\F(M)=P^\perp$. But there are other examples of $M$ such that $\F(M)=N^\perp$ for some module $N$ (without $N^\perp$ being the whole category of modules).
For, recall
 a module $M$ is said to be strongly Gorenstein flat if there is an exact sequence $\cdots \to P_1 \to P_0 \to P^0 \to P^1 \to \cdots$ of projective modules
with $M=\Ker(P_0\to P^0) $ such that $\Hom(-,\f)$ leaves the sequence exact (see \cite[Definition 2.1]{Ding}). Gillespie in \cite{Gillespie} renamed these modules as Ding projective modules since there exists an alternative definition of a strongly Gorenstein flat module. This definition was given in \cite[Definition 3.1]{Bennis} and thus we understand by a strongly Gorenstein flat module, a module $M$ such that there exists an exact sequence of flat modules
$$\cdots \stackrel{f}\to F\stackrel{f}\to F\stackrel{f}\to F \stackrel{f}\to \cdots$$
such that $M=\Im(F \stackrel{f}\to F)$ and such that $I\tens{R}-$ leaves the sequence exact whenever $I$ is an injective right module.  

\begin{ex}\label{cor-SGF1}
	\begin{enumerate}
\item For any Ding projective module $M$, there exists a Ding projective module $M'$ such that $\F(M)=M'^{\perp}$.
\item  For any strongly Gorenstein flat and finitely presented module M, $\F(M)=M^\perp$.
\end{enumerate}
	
\end{ex}

\begin{proof}
	
	1. Any Ding projective module $M$ has an $\f$-preenvelope
	$\alpha: M \to F$ with $F$ projective. Moreover, $\Coker(\alpha)$ is Ding projective. We apply Proposition \ref{M-X-M'} to deduce that $\F(M)={\Coker{(\alpha)}}^{\perp}$.\\
	2. By \cite[Proposition 3.9]{Bennis}, a module is finitely generated and strongly Gorenstein projective if and only if it is finitely presented and strongly Gorenstein flat. By Corollary \ref{cor-f(FP)=pr(FP)}, $\F(M)=\P(M)$ for any finitely presented and strongly Gorenstein flat module $M$ and by \cite[Corollary 2.9]{SubprojAb} we can conclude that $\F(M)=M^\perp$. 
\end{proof}

In the classical homological algebra, relations between flat modules, projective modules and absolutely pure modules have been extensively studied. In our new context, we investigate the counterpart results. Namely, we study the relation between flat-precover completing domains, subprojectivity domains and  absolutely
pure domains (see \cite{SubprojAb} and \cite{Dur}). We start by investigating the relationship between flat-precover completing domains and subprojectivity domains. \\
Clearly, the subprojectivity domain $\P(M)$ of a module $M$ is contained in $\F(M)$. However, they are not necessary equal as it is shown by the following example.

\begin{ex}
The abelian group $\Q$ is a flat $\Z$-module, thus by Proposition \ref{prop-MinX}, $\Q\in\F(\Q)$. But $\Q\notin\P(\Q)$ for otherwise $\Q$ would be a projective $\Z$-module.
\end{ex}
  
 From Proposition \ref{prop-X-Y} we deduce the following result. 

\begin{prop}\label{prop-flat-proj-dom}
	Let $M$ be a module. Then, $\F(M)=\P(M)$ if and only if $\f\subseteq \P(M)$.
\end{prop}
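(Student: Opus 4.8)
The statement to prove is Proposition~\ref{prop-flat-proj-dom}: for a module $M$, we have $\F(M)=\P(M)$ if and only if $\f\subseteq\P(M)$.

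The plan is to invoke Proposition~\ref{prop-X-Y} directly, after correctly identifying the two precovering classes involved. Recall that $\P(M)$ is the subprojectivity domain, which by the discussion after Definition~\ref{Def-subd} is exactly the $\x$-precover completing domain when $\x$ is the class $\p$ of projective modules; and $\F(M)$ is the $\x$-precover completing domain when $\x$ is the class $\f$ of flat modules. Since every projective module is flat, we have the inclusion of precovering classes $\p\subseteq\f$. Both classes satisfy the standing hypotheses (closure under isomorphisms, finite direct sums, and direct summands) and both are precovering, so Proposition~\ref{prop-X-Y} applies with $\x=\p$ and $\y=\f$.

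First I would note that Proposition~\ref{prop-X-Y}, read with this identification, gives $\P(M)\subseteq\F(M)$ unconditionally, together with the equivalence: $\P(M)=\F(M)$ if and only if $\f\subseteq\P(M)$. That is precisely the claimed statement, so the proof is essentially a matter of translating notation. To make this self-contained I would briefly recall why the ``furthermore'' clause of Proposition~\ref{prop-X-Y} yields exactly what we want: if $\f\subseteq\P(M)$, then given any $N\in\F(M)$ and any morphism $f\colon M\to N$, one factors $f=gh$ through a flat module $Y$ via $h\colon M\to Y$ and $g\colon Y\to N$; since $Y\in\f\subseteq\P(M)$, the morphism $h$ in turn factors through a projective module, and composing shows $f$ factors through a projective, so $N\in\P(M)$. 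The reverse direction is immediate, since $\f\subseteq\F(M)$ always holds (as remarked after Lemma~\ref{Lemma-subX}), so $\F(M)=\P(M)$ forces $\f\subseteq\P(M)$.

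I do not anticipate a genuine obstacle here, as the result is a corollary of Proposition~\ref{prop-X-Y}; the only point requiring a moment's care is the verification that $\p$ and $\f$ both genuinely qualify as admissible precovering classes $\x$ and $\y$ in the sense fixed at the start of Section~\ref{section-subX}. This is standard: $\f$ is precovering over any ring (flat covers exist, hence flat precovers), $\p$ is trivially precovering, and both are closed under the required operations. Once this is noted, the proof reduces to the single sentence ``apply Proposition~\ref{prop-X-Y} with $\x=\p$ and $\y=\f$,'' which is indeed how the excerpt signals the argument should go (``From Proposition~\ref{prop-X-Y} we deduce the following result'').
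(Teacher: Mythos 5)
Your proof is correct and is exactly the paper's argument: the paper deduces this proposition directly from Proposition~\ref{prop-X-Y} applied with $\x=\p$ and $\y=\f$, which is precisely what you do (your unwinding of the ``furthermore'' clause and the remark that $\f\subseteq\F(M)$ always holds are faithful to the paper's reasoning). No issues.
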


Recall that a ring $R$ is perfect if and only if any flat module is projective. In terms of flat-precover completing domains, we have the following result. 
\begin{cor}\label{cor-coinc-subflat-subproj}
	The following conditions are equivalent:
	\begin{enumerate}
\item $R$ is perfect.
\item $\F(M)=\P(M)$ for any module $M$.
\item $\F(M)\subseteq \P(M)$ for any module $M$.
	\end{enumerate}
\end{cor}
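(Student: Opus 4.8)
The plan is to prove Corollary \ref{cor-coinc-subflat-subproj} by establishing the cycle $(1)\Rightarrow (2)\Rightarrow (3)\Rightarrow (1)$, leaning heavily on the preceding Proposition \ref{prop-flat-proj-dom}, which already characterizes the equality $\F(M)=\P(M)$ by the single containment $\f\subseteq\P(M)$. The implications $(1)\Rightarrow(2)$ and $(2)\Rightarrow(3)$ should be the routine ones, so the real content lives in recovering perfectness from the domain containment in $(3)\Rightarrow(1)$.

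For $(1)\Rightarrow(2)$, I would argue as follows. Suppose $R$ is perfect, so every flat module is projective and hence $\f\subseteq\p$. It is a standing fact (noted right after Lemma \ref{Lemma-subX}, applied with $\x=\p$) that $\p\subseteq\P(M)$ for every module $M$; combining these gives $\f\subseteq\P(M)$ for every $M$. By Proposition \ref{prop-flat-proj-dom} this yields $\F(M)=\P(M)$ for every module $M$, which is exactly $(2)$. The implication $(2)\Rightarrow(3)$ is immediate, since equality of the two domains in particular gives the containment $\F(M)\subseteq\P(M)$.

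The main obstacle is $(3)\Rightarrow(1)$, where I must produce the conclusion ``every flat module is projective'' from a purely domain-theoretic hypothesis. The key idea is to test the hypothesis on a flat module itself. Let $F$ be an arbitrary flat module. Since $F$ is flat, Proposition \ref{prop-M-flat} (or already Proposition \ref{prop-MinX} for the flat class) gives $\F(F)=R\text{-}\Mod$, the whole module category. The assumption $(3)$ then forces $R\text{-}\Mod=\F(F)\subseteq\P(F)$, so $\P(F)=R\text{-}\Mod$. But by the subprojectivity analogue of Proposition \ref{prop-MinX} (namely \cite{Sergio}, where a module $M$ is projective precisely when $\P(M)=R\text{-}\Mod$), this says exactly that $F$ is projective. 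Since $F$ was an arbitrary flat module, every flat module is projective, i.e.\ $R$ is perfect, establishing $(1)$.

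Thus the whole proof reduces to this observation: the flat-precover completing domain of a flat module is as large as possible, so squeezing it inside the subprojectivity domain collapses the latter to the whole category and forces projectivity. The only care needed is to invoke the correct ``domain $=R\text{-}\Mod$ iff membership in the class'' statements for both $\f$ (via Proposition \ref{prop-M-flat}) and $\p$ (via the subprojectivity theory of \cite{Sergio}), and to recall the defining characterization of a perfect ring as one over which flat modules are projective. No delicate construction is required beyond these citations, so I expect the argument to be short once the three implications are organized into the cycle above.
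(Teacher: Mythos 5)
Your proof is correct and follows exactly the route the paper intends: the corollary is stated without proof as an immediate consequence of Proposition \ref{prop-flat-proj-dom}, and your argument fills in precisely that reasoning, using $\f\subseteq\p\subseteq\P(M)$ for $(1)\Rightarrow(2)$ and the ``domain equals $R$-$\Mod$ iff membership in the class'' characterization (Proposition \ref{prop-MinX} for $\f$ and its subprojectivity analogue from \cite{Sergio}) for $(3)\Rightarrow(1)$. No gaps; the cycle $(1)\Rightarrow(2)\Rightarrow(3)\Rightarrow(1)$ is complete and each citation is used correctly.
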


It is shown in \cite[Corollary 2.19]{SubprojAb} that the subprojectivity domain of any pure-projective module contains the class of flat modules. Thus, by Proposition \ref{prop-flat-proj-dom}, we deduce the following result.

\begin{cor}\label{cor-f(FP)=pr(FP)}
	For any pure-projective module $M$, $\F(M)=\P(M)$.
\end{cor}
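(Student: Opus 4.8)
The plan is to chain together the two cited results exactly as the sentence preceding the corollary already hints. The statement to prove is that $\F(M)=\P(M)$ for any pure-projective module $M$. The key observation is that we have just established in Proposition \ref{prop-flat-proj-dom} that the equality $\F(M)=\P(M)$ is equivalent to the single inclusion $\f\subseteq\P(M)$, so the corollary reduces entirely to verifying that the subprojectivity domain of a pure-projective module contains the class of all flat modules.

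First I would invoke \cite[Corollary 2.19]{SubprojAb}, which (as quoted in the excerpt) asserts precisely that for any pure-projective module $M$ one has $\f\subseteq\P(M)$. This is the only external input needed, and it is exactly the hypothesis appearing on the right-hand side of the equivalence in Proposition \ref{prop-flat-proj-dom}. Thus the verification step is immediate: given that $M$ is pure-projective, the cited corollary supplies $\f\subseteq\P(M)$.

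Second I would feed this inclusion into Proposition \ref{prop-flat-proj-dom}. That proposition states $\F(M)=\P(M)$ if and only if $\f\subseteq\P(M)$; since the latter holds by the previous step, the equality $\F(M)=\P(M)$ follows immediately. No diagram chase, factorization argument, or closure computation is required here, since all of that work has been absorbed into the two results being cited.

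There is essentially no obstacle in the proof itself: it is a two-line deduction combining one cited fact with one already-proven proposition. If anything, the only point that deserves a moment's care is making sure the class conventions align, namely that the $\f$ appearing in \cite[Corollary 2.19]{SubprojAb} (the class of flat modules inside a subprojectivity-domain statement) is the same class $\f$ that Proposition \ref{prop-flat-proj-dom} requires to sit inside $\P(M)$. Since both refer to the class of flat modules in $R$-$\Mod$, this is unproblematic, and the corollary follows at once.
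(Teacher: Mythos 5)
Your proof is correct and follows exactly the same route as the paper: the paper also deduces the corollary by citing \cite[Corollary 2.19]{SubprojAb} for the inclusion $\f\subseteq\P(M)$ and then applying Proposition \ref{prop-flat-proj-dom}. Nothing is missing, and no further detail is needed.
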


Corollary \ref{cor-f(FP)=pr(FP)} stands as a generalization to the very well-known result that any finitely presented and flat module is projective. Indeed, if $N$ is a finitely presented and flat module, then $N\in \F(N)$ and so $N\in\P(N)$. Therefore, $N$ is projective.\\

We recall that a module $M$ is said to be f-projective if, for
every finitely generated submodule $C$ of $M$, the
inclusion map factors through a finitely generated free module. And we say that a module $M$ is Mittag-Leffler if for every finitely generated submodule $C$ of $M$, the inclusion map factors through a finitely presented module (see \cite{Jones}). We denote the class of Mittag-Leffler modules by $\ML$.\\ We apply Proposition \ref{prop-flat-proj-dom} to characterize when the flat-precover completing domains and subprojectivity domains of finitely generated modules coincide. We obtain a characterization of rings over which every flat module is f-projective in terms of flat-precover completing domains. An extensive study of rings over which every flat module is f-projective is done by Shenglin (see \cite[Corollary 5]{Shenglin}).

\begin{prop}\label{prop-shenglin}
	The following assertions are equivalent:
	\begin{enumerate}
	    \item Any flat module is f-projective.
		\item Any flat module is Mittag-Leffler.
		\item $\F(M)=\P(M)$ for any finitely generated module $M$.
		\item The class of modules holding in $\F(M)$ for any finitely generated module $M$ is precisely the class of f-projective modules.
	\end{enumerate} 
\end{prop}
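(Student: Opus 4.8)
The plan is to prove the cycle $(1)\Rightarrow(3)\Rightarrow(4)\Rightarrow(1)$ together with the separate equivalence $(1)\Leftrightarrow(2)$, using Proposition \ref{prop-flat-proj-dom} as the main engine and the factorization characterizations of the two domains (Lemma \ref{Lemma-subX} and its subprojective analogue \cite[Proposition 2.7]{SubprojAb}, which says $N\in\P(M)$ iff every morphism $M\to N$ factors through a projective). The one technical device I would isolate first is a \emph{reduction principle}: if $C$ is finitely generated and a morphism $C\to N$ factors through a projective module, then it factors through a finitely generated free module. Indeed, realizing the projective as a summand of a free module $F$, the image of $C$ in $F$ is finitely generated, hence contained in a finite free summand $F_0$, through which the map then factors. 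Consequently, for finitely generated $M$ a morphism $M\to N$ factors through a projective iff it factors through a finitely generated free module; applied to inclusions of finitely generated submodules this shows that $N$ is f-projective iff every morphism from a finitely generated module to $N$ factors through a projective, that is, iff $N\in\P(M)$ for every finitely generated $M$. This unconditional identity, $\bigcap_{M\text{ f.g.}}\P(M)=\{\text{f-projective modules}\}$, will be used twice.

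For $(1)\Rightarrow(3)$ I would invoke Proposition \ref{prop-flat-proj-dom}, which reduces $\F(M)=\P(M)$ to $\f\subseteq\P(M)$. Given a finitely generated $M$, a flat module $N$ and $f\colon M\to N$, the image $f(M)$ is a finitely generated submodule of $N$; since $N$ is f-projective by $(1)$, the inclusion $f(M)\hookrightarrow N$ factors through a finitely generated free module, so $f$ factors through a projective and $N\in\P(M)$. Hence $\f\subseteq\P(M)$ and $\F(M)=\P(M)$ for every finitely generated $M$. For $(3)\Rightarrow(4)$, condition $(3)$ forces $\bigcap_{M\text{ f.g.}}\F(M)=\bigcap_{M\text{ f.g.}}\P(M)$, and the right-hand side is exactly the class of f-projective modules by the reduction principle. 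For $(4)\Rightarrow(1)$ I would use the remark opening this section: a flat module $N$ lies in $\F(M)$ for every module $M$, in particular for every finitely generated $M$, so $N\in\bigcap_{M\text{ f.g.}}\F(M)$, which by $(4)$ is the class of f-projective modules; thus every flat module is f-projective.

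Finally, for $(1)\Leftrightarrow(2)$ it suffices to show that a flat module is f-projective iff it is Mittag-Leffler. One direction is immediate, since a finitely generated free module is finitely presented, so f-projective implies Mittag-Leffler in the sense used here. For the converse, let $M$ be flat and Mittag-Leffler and let $C\subseteq M$ be finitely generated; the inclusion factors as $C\to P\to M$ with $P$ finitely presented. Writing $M$ as a direct limit of finitely generated free modules (Lazard) and using that $\Hom(P,-)$ commutes with direct limits for $P$ finitely presented, the map $P\to M$ factors through some finitely generated free module $F$, whence the inclusion $C\to M$ factors through $F$ and $M$ is f-projective. The main obstacle is precisely this step $(2)\Rightarrow(1)$: it is the only place where genuine module theory beyond formal manipulation enters, through Lazard's description of flat modules and the compactness of finitely presented modules with respect to direct limits; the remainder is bookkeeping around the factorization characterizations and the reduction principle. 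Alternatively, $(1)\Leftrightarrow(2)$ may be quoted directly from \cite[Corollary 5]{Shenglin}.
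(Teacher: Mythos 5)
Your proposal is correct and follows essentially the same route as the paper: the same implication structure driven by Proposition \ref{prop-flat-proj-dom}, together with the identification of $\bigcap_{M\ \mathrm{f.g.}}\P(M)$ with the class of f-projective modules and the equivalence between f-projectivity and being flat Mittag-Leffler. The only difference is that you prove these two ingredients from scratch (via your reduction principle and via Lazard's theorem), where the paper simply cites \cite[Proposition 2.1]{SubprojAb} and \cite[Proposition 1]{Jones}.
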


\begin{proof}
	$(1) \Leftrightarrow (2)$ By \cite[Proposition 1]{Jones}, a module $M$ is f-projective
	if and only if $M$ is flat and Mittag-Leffler.\\
	$(1) \Leftrightarrow (3) $ By Proposition \ref{prop-flat-proj-dom} $\F(M)=\P(M)$ for every of finitely generated modules $M$ if and only if the class of flat modules is inside the subprojectivity domain of every finitely generated module. By \cite[Proposition 2.1]{SubprojAb} the class of modules holding in the subprojectivity domain of every finitely generated module is precisely the class of f-projective modules. Thus, $\F(M)=\P(M)$ for every finitely generated modules $M$ if and only any flat module is f-projective. \\
	$(3) \Rightarrow (4) $ Follows from \cite[Proposition 2.1]{SubprojAb}.\\
	$(4) \Rightarrow (1) $ Clear.
	\end{proof}

Lambek \cite{Lambek} proved that, over any ring, a module is flat if and
only if its character module is absolutely pure. The next proposition generalizes this fact while connecting flat-precover completing domains with absolutely pure domains.

Let $M$ be a left $R$-module and $N$ be a right $R$-module. Recall that $N$ is absolutely
$M$-pure if $N\otimes_{R} M\to K\otimes_{R} M$ is a monomorphism for every extension $K$ of $N$, or equivalently if there exists an absolutely pure extension $E$ of $N$ such that $N\otimes_{R} M\to E\otimes_{R} M$
is a monomorphism (see \cite[Proposition 2.2]{Dur}). For a module $M$, the absolutely
pure domain of $M$, $\A(M)$, is defined to be the collection of all modules $N$ such that $N$ is
absolutely $M$-pure.

\begin{prop}\label{prop-Flatness-AbsPure}
	Let $M$ be a finitely presented module and $N$ a module. Then, $N\in \F(M)$ if and only if $N^+ \in \A(M)$. 
	
\end{prop}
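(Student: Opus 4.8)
The plan is to reduce both sides of the asserted equivalence to a single condition on a short exact sequence coming from a flat precover, passing between $\Hom$ and $\tens{R}$ by means of character modules. First I would fix a flat precover $g\colon F\to N$. Since a surjection onto $N$ from a free (hence flat) module factors through $g$, the precover $g$ is itself an epimorphism, giving a short exact sequence $0\to K\to F\xrightarrow{g} N\to 0$ of left modules. By Lemma \ref{Lemma-subX}, $N\in\F(M)$ if and only if $\Hom(M,g)\colon\Hom(M,F)\to\Hom(M,N)$ is an epimorphism: any factorization of a map $M\to N$ through a flat module lifts along $g$, so the existence criterion of the lemma is realized by this particular precover. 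Dualizing with $(-)^+=\Hom_{\Z}(-,\Q/\Z)$ yields an exact sequence $0\to N^+\to F^+\to K^+\to 0$ of right modules in which $F^+$ is injective (as $F$ is flat), hence in particular absolutely pure; thus $F^+$ is a convenient absolutely pure extension of $N^+$.

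The key step exploits that $M$ is finitely presented through the isomorphism $A^+\tens{R}M\cong\Hom_R(M,A)^+$, natural in the left module $A$. This is valid for every $A$ precisely because $M$ is finitely presented: it holds for finite free $M$, and since both sides are right exact functors of $M$, it extends by the five lemma along a finite presentation of $M$. Applying naturality to $g\colon F\to N$ identifies the map $N^+\tens{R}M\to F^+\tens{R}M$ induced by $g^+$ with the character map $\Hom(M,g)^+\colon\Hom(M,N)^+\to\Hom(M,F)^+$. Because $\Q/\Z$ is an injective cogenerator, a homomorphism is an epimorphism if and only if its character map is a monomorphism; hence $\Hom(M,g)$ is epic if and only if $N^+\tens{R}M\to F^+\tens{R}M$ is a monomorphism.

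It then remains to combine these reductions. If $N\in\F(M)$, then $\Hom(M,g)$ is epic, so $N^+\tens{R}M\to F^+\tens{R}M$ is monic; since $F^+$ is absolutely pure, \cite[Proposition 2.2]{Dur} gives that $N^+$ is absolutely $M$-pure, that is $N^+\in\A(M)$. Conversely, if $N^+\in\A(M)$, then by definition $N^+\tens{R}M\to K'\tens{R}M$ is monic for every extension $K'$ of $N^+$, in particular for $K'=F^+$; running the equivalence of the previous paragraph backwards shows $\Hom(M,g)$ is epic, whence $N\in\F(M)$.

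I expect the main obstacle to be the finitely presented isomorphism $A^+\tens{R}M\cong\Hom_R(M,A)^+$ together with keeping the module sides consistent ($M$ on the left, $N^+$ and $F^+$ on the right) and the variances straight, so that the induced map $g^+\tens{R}\mathrm{id}_M$ genuinely corresponds to $\Hom(M,g)^+$. Once that naturality square is established, the cogenerator duality and Durğun's criterion close the argument with only routine verifications.
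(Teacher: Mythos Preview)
Your proposal is correct and follows essentially the same route as the paper: fix a flat precover $F\to N$, use the natural isomorphism $A^+\tens{R}M\cong\Hom_R(M,A)^+$ for finitely presented $M$ (the paper cites \cite[Theorem 3.2.11]{EJ} for this) to identify $N^+\tens{R}M\to F^+\tens{R}M$ with $\Hom(M,g)^+$, and then conclude via the cogenerator property and the fact that $F^+$ is an absolutely pure extension of $N^+$. Your write-up is simply more explicit about the auxiliary justifications (surjectivity of $g$, why the isomorphism holds, invoking \cite[Proposition 2.2]{Dur}), but the argument is the same.
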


\begin{proof}
	Let $F\to N\to 0$ be a flat precover of $N$. Since $M$ is finitely presented, by \cite[Theorem 3.2.11]{EJ} we have the following commutative diagram 
	$$
	\xymatrix{
		(\Hom(M,N))^+\ar[r]\ar[d]_\cong& (\Hom(M,F))^+\ar[d]^\cong\\
		N^+\otimes_{R} M \ar[r] & F^+\otimes_{R} M\\
	}
	$$
	
	Hence,  $N^+\otimes_{R} M \to F^+\otimes_{R} M$ is monic if and only if $\Hom(M,F) \to \Hom(M,N)$ is epic.  Therefore, $N^+ \in \A(M)$ if and only if $N\in \F(M)$.
	\end{proof}

We end this section with some characterizations of coherent rings in terms of flat-precover completing domains. First notice that from Proposition \ref{closure-prop-X} we get immediately the following result and recall that a ring $R$ is left semihereditary if and only if $R$ is left coherent and $\wdim R\leq 1$. 

\begin{prop} \label{closure-prop}
	The following properties hold:
	\begin{enumerate}
		\item The ring $R$ is right coherent if and only if the flat-precover completing domain of any module is closed under arbitrary direct products. 
		\item $\wdim(R) \leq 1$ if and only if the flat-precover completing domain of any module is closed under submodules.
		\item The ring $R$ is left semihereditary if and only if the flat-precover completing domain of any right $R$-module is closed under arbitrary direct products and the flat-precover completing domain of any module is closed under submodules.
		
	\end{enumerate} 
\end{prop}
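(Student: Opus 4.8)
The plan is to deduce all three statements directly from the general closure results of Proposition \ref{closure-prop-X}, specialized to the case $\x=\f$, by matching the closure behaviour of the class $\f$ itself with classical ring-theoretic characterizations. Since $\f$ is a precovering class (by the flat cover theorem) that is closed under isomorphisms, finite direct sums and direct summands, the standing hypotheses of Section \ref{section-subX} are met, and in particular assertions (5) and (6) of Proposition \ref{closure-prop-X} apply with $\x=\f$. Thus in each item it suffices to translate a closure property of the domains $\F(M)$ into the corresponding closure property of $\f$, and then invoke the relevant classical equivalence.

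For (1), I would invoke Chase's classical result: the class of flat left $R$-modules is closed under arbitrary direct products if and only if $R$ is right coherent. By Proposition \ref{closure-prop-X}(5), $\f$ is closed under products if and only if $\F(M)$ is closed under products for every module $M$. Chaining these two equivalences yields (1). For (2), I would use the standard characterization of weak dimension, namely that $\wdim R\le 1$ holds if and only if every submodule of a flat left $R$-module is again flat; one direction follows by taking a free presentation $0\to K\to F\to M\to 0$ and noting that $\wdim R\le 1$ forces $K$ to be flat, and the converse follows since flatness of all such kernels bounds every flat dimension by $1$. Hence $\f$ is closed under submodules if and only if $\wdim R\le 1$, and Proposition \ref{closure-prop-X}(6) converts this into the closure of $\F(M)$ under submodules for every $M$.

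For (3), I would simply combine (1) and (2), taking care of the sidedness. Applying the argument of (1) over the opposite ring $R^{\mathrm{op}}$, the class of flat right $R$-modules is closed under products if and only if $R$ is left coherent, and by Proposition \ref{closure-prop-X}(5) this is equivalent to $\F(M)$ being closed under products for every right module $M$. The submodule condition is imposed on left modules, so by (2) it is equivalent to $\wdim R\le 1$ (which is left-right symmetric). Since $R$ is left semihereditary precisely when it is left coherent and $\wdim R\le 1$, the conjunction of the two closure conditions characterizes left semihereditariness, giving (3).

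The only point demanding genuine attention is the left/right bookkeeping in (3): the product-closure hypothesis must be read for domains of right modules (yielding left coherence via Chase over $R^{\mathrm{op}}$), whereas the submodule-closure hypothesis is read for domains of left modules (yielding $\wdim R\le 1$); conflating the two sides would break the equivalence. Beyond this, everything is an immediate consequence of Proposition \ref{closure-prop-X} together with the cited classical equivalences, so I anticipate no substantive obstacle.
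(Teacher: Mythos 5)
Your proposal is correct and takes essentially the same route as the paper, which likewise obtains all three items immediately from Proposition \ref{closure-prop-X}(5),(6) applied to $\x=\f$, combined with the classical facts that flat left modules are closed under products iff $R$ is right coherent (Chase), that $\wdim R\le 1$ iff submodules of flat modules are flat, and that $R$ is left semihereditary iff it is left coherent with $\wdim R\le 1$ (with the same left/right bookkeeping you describe for item (3)). One minor quibble: in your sketch of the $\wdim$ characterization the two directions are slightly conflated --- the free-presentation argument is the one proving that submodule-closure forces $\wdim R\le 1$, while the forward direction instead uses the sequence $0\to N\to F\to F/N\to 0$ and $\Tor$ dimension shifting on $F/N$ --- but this is a standard fact and does not affect the validity of your argument.
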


Cheatham and Stone in \cite[Theorem 1]{CS} characterized right coherent rings in terms of absolutely pure and flat modules. The next proposition gives the counterpart result in terms of flat-precover completing domains and absolutely pure domains.

\begin{prop}\label{prop-Flatness-AbsPure2}
	
	The following assertions are equivalent:
	\begin{enumerate}
		\item The ring $R$ is right coherent.
		\item For any module $M$ and any right module $N$, $N\in \A(M)$ if and only if $N^+\in\F(M)$.
		\item For any finitely presented module $M$ and any module $N$, $N\in \F(M)$ if and only if $N^{++}\in\F(M)$.
	\end{enumerate}
\end{prop}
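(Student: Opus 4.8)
The plan is to prove the cycle $(1)\Rightarrow(2)\Rightarrow(3)\Rightarrow(1)$. The two workhorses are Proposition \ref{prop-Flatness-AbsPure}, which bridges the two families of domains via $N\in\F(M)\Leftrightarrow N^+\in\A(M)$ for finitely presented $M$, and the natural adjunction isomorphism $\Hom(M,N^+)\cong(N\otimes_R M)^+$ together with the exactness of the character functor $(-)^+$; coherence itself will be fed in through Proposition \ref{closure-prop}(1) and the theorem of Cheatham and Stone \cite[Theorem 1]{CS}. I would dispatch $(2)\Rightarrow(3)$ first, since it is purely formal: for finitely presented $M$ and arbitrary $N$, Proposition \ref{prop-Flatness-AbsPure} gives $N\in\F(M)\Leftrightarrow N^+\in\A(M)$, and applying hypothesis $(2)$ to the right module $N^+$ gives $N^+\in\A(M)\Leftrightarrow N^{++}\in\F(M)$; composing the two equivalences yields exactly $N\in\F(M)\Leftrightarrow N^{++}\in\F(M)$.

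For $(1)\Rightarrow(2)$ I would argue pointwise in $M$ through character modules. Given a right module $N$, embed it in its injective envelope $E(N)$ and dualize the inclusion to an epimorphism $q\colon E(N)^+\to N^+$ with $\Ker q=(E(N)/N)^+$; here $E(N)^+$ is flat, while $\Ker q$, being a character module, is pure-injective and hence cotorsion. If $N\in\A(M)$, then $N\otimes_R M\to E(N)\otimes_R M$ is monic, so dualizing and transporting along the adjunction makes $\Hom(M,q)$ epic; thus every morphism $M\to N^+$ factors through the flat module $E(N)^+$, and $N^+\in\F(M)$ by Lemma \ref{Lemma-subX}. Conversely, if $N^+\in\F(M)$ then any $M\to N^+$ factors through a flat module $L$, and $\Ext^1(L,\Ker q)=0$ (because $\Ker q$ is cotorsion) lets this factorization lift along $q$, again forcing $\Hom(M,q)$ to be epic, i.e.\ $N\otimes_R M\to E(N)\otimes_R M$ monic, whence $N\in\A(M)$ by \cite[Proposition 2.2]{Dur}. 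The subtle point here is to locate exactly how right coherence is used to make this pointwise equivalence hold simultaneously across all of $\Mod$-$R$, which is where \cite[Theorem 1]{CS} should enter.

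The step I expect to be the real obstacle is $(3)\Rightarrow(1)$, which must upgrade a pointwise statement about finitely presented modules to the global condition of coherence. By Proposition \ref{closure-prop}(1) it is enough to show that $\F(M)$ is closed under arbitrary direct products, and it suffices to test this on finitely presented $M$, which are small; Proposition \ref{closure-prop}(3) then already gives $\bigoplus_i N_i\in\F(M)$ for any family $\{N_i\}$ lying in $\F(M)$, so that $(3)$ places the bidual $(\bigoplus_i N_i)^{++}=(\prod_i N_i^+)^+$ in $\F(M)$ as well. The crux is then to bridge from this bidual to the product $\prod_i N_i$ itself --- for instance by realizing $\prod_i N_i$ as a pure submodule or a direct summand of a module already known to lie in $\F(M)$ --- or, equivalently, to read $(3)$ through Proposition \ref{prop-Flatness-AbsPure} as the stability $P\in\A(M)\Leftrightarrow P^{++}\in\A(M)$ for character modules $P$ and then recover coherence from \cite[Theorem 1]{CS}. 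Reconciling this character-bidual formulation of $(3)$ with the product-closure description of coherence is the delicate point on which the whole equivalence rests.
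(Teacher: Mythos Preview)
Your $(2)\Rightarrow(3)$ is the paper's argument verbatim. For $(1)\Rightarrow(2)$ you have the right construction, but there is no mystery about where coherence enters: it is used precisely, and only, in the assertion that $E(N)^+$ is flat. Once that holds, the kernel $(E(N)/N)^+$ is pure-injective and hence cotorsion, so $q\colon E(N)^+\to N^+$ is a special flat precover; then Lemma~\ref{Lemma-subX}(2) together with the adjunction gives
\[
N^+\in\F(M)\ \Longleftrightarrow\ \Hom(M,q)\text{ epic}\ \Longleftrightarrow\ N\otimes_R M\to E(N)\otimes_R M\text{ monic}\ \Longleftrightarrow\ N\in\A(M)
\]
in one stroke. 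This is exactly the paper's route (it cites \cite[Proposition 5.3.5]{EJ} for the fact that $E^+\to N^+$ is a flat precover under coherence). Your separate lifting argument for the converse is correct and in fact shows that $N^+\in\F(M)\Rightarrow N\in\A(M)$ holds over any ring, but that extra generality is not needed.

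The genuine gap is in $(3)\Rightarrow(1)$. From $(\bigoplus_i N_i)^{++}\in\F(M)$ there is no general passage to $\prod_i N_i\in\F(M)$: the product is not naturally a summand or pure submodule of that bidual, and the $\A(M)$-stability you can extract from $(3)$ via Proposition~\ref{prop-Flatness-AbsPure} only applies to right modules of the form $N^+$, which does not feed directly into \cite[Theorem 1]{CS}. The paper bypasses all of this with a one-line specialization: take $N$ flat. Then $N\in\F(M)$ for every finitely presented $M$, so by $(3)$ also $N^{++}\in\F(M)$ for every finitely presented $M$, whence $N^{++}$ is flat (flatness being detectable on finitely presented test modules, as noted at the start of Section~3). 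Now \cite[Theorem 1]{CS}, one of whose equivalent forms characterizes right coherence by ``$N$ flat $\Rightarrow N^{++}$ flat'', finishes at once. There is no need to consider arbitrary families in $\F(M)$ or to aim for product-closure directly.
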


\begin{proof}
	$(1)\Rightarrow (2)$ Let $0\to N \to E$ be an injective preenvelope of $N$. Since $R$ is coherent, $E^+\to N^+ \to 0$ is a flat precover by \cite[Proposition 5.3.5]{EJ}. We have the following commutative diagram 
	
	$$
	\xymatrix{
		\Hom(M,E^+)\ar[r]\ar[d]_\cong& \Hom(M,N^+)\ar[d]^\cong\\
		(E\otimes_{R} M)^+ \ar[r] & (N\otimes_{R} M)^+\\
	}
	$$
	
	Hence, 	$\Hom(M,E^+)\to \Hom(M,N^+) $ is epic if and only if
	$N\otimes_{R} M\to E\otimes_{R} M $ is monic. Therefore,  $N^+\in\F(M)$ if and only if $N\in \A(M)$.\\
	$(2)\Rightarrow (3)$ Let $M$ be a finitely presented module and $N$ be a module. Then $N\in \F(M)$ if and only if $N^{+}\in\A(M)$ by Proposition \ref{prop-Flatness-AbsPure} and so $N\in \F(M)$ if and only if $N^{++}\in\F(M)$ by assumption.\\
	$(3)\Rightarrow (1)$ Let $N$ be a flat module. Then by Proposition \ref{prop-M-flat}, $N\in \F(M)$ for any module $M$. By assumption $N^{++}\in\F(M)$ for any module $M$ and so again by Proposition \ref{prop-M-flat} $N^{++}$ is flat. We conclude by \cite[Theorem 1]{CS} that $R$ is right coherent.
 \end{proof}

\section{Subinjectivity and flat-precover completing domains}

The relation between flat and injective modules was widely studied by several authors throughout the years leading to a wide range of rich results. In this section, we study to what extent we can compare the degrees of flatness and injectivity of modules.

Recall first the definition of subinjectivity domains. We say that a module $M$ is $N$-subinjective if for every extension $K$ of $N$ and every morphism $f : N \to M$ there exists a morphism $g : K \to M$ such that $g_{/N}=f$. The subinjectivity domain of a module $M$, $\In(M)$, is defined to be the collection of all modules $N$ such that $M$ is $N$-subinjective (see \cite{PinSergio}).

We start with the following result.
\begin{prop}\label{prop-Flatness-Subinj} 
		The following properties hold:
	\begin{enumerate}
		\item For any module $M$ and any right module $N$, if $N^+\in \F(M)$ then $N\in \In(M^+)$.
		\item The ring $R$ is coherent if and only if for any module $M$ and any right module $N$, if $N\in \In(M^+)$ then $N^+\in \F(M)$.
	\end{enumerate}
\end{prop}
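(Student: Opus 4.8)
The backbone of the argument is the character-module duality
$$\Hom(N,M^+)\;\cong\;\Hom(M,N^+)\;\cong\;(N\tens{R}M)^+,$$
natural in each variable (see \cite{EJ}), combined with the fact that $\Q/\Z$ is an injective cogenerator, so that a map of abelian groups is a monomorphism exactly when its character map is an epimorphism. Using this I would first record the identification $\In(M^+)=\A(M)$ for every left module $M$ and right module $N$: for an extension $N\hookrightarrow K$ the restriction map $\Hom(K,M^+)\to\Hom(N,M^+)$ is carried by the duality to the character of $N\tens{R}M\to K\tens{R}M$, so $M^+$ is $N$-subinjective precisely when $N\tens{R}M\to K\tens{R}M$ is monic for every extension $K$, which is exactly the definition of $N\in\A(M)$. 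With this translation in hand, both assertions follow from tools already established, namely Lemma \ref{Lemma-subX} and Proposition \ref{prop-Flatness-AbsPure2}.

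For $(1)$ I would argue directly, without passing through $\A(M)$. Fix $f\colon N\to M^+$ and an extension $N\hookrightarrow K$; dualizing turns $f$ into $\widehat f\colon M\to N^+$, and extending $f$ to $K$ is the same problem as lifting $\widehat f$ along the epimorphism $K^+\twoheadrightarrow N^+$ coming from $0\to N\to K$, whose kernel is $(K/N)^+$. Since $N^+\in\F(M)$, Lemma \ref{Lemma-subX} factors $\widehat f$ as $M\to F\to N^+$ with $F$ flat. The key observation is that $(K/N)^+$ is a character module, hence pure-injective and in particular cotorsion (see \cite{EJ}), so that $\Ext^1(F,(K/N)^+)=0$; therefore $F\to N^+$ lifts to $F\to K^+$, and precomposing with $M\to F$ lifts $\widehat f$. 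Dualizing back yields $g\colon K\to M^+$ extending $f$, so $N\in\In(M^+)$.

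Assertion $(2)$ then assembles from $(1)$, the identification, and Proposition \ref{prop-Flatness-AbsPure2}. If $R$ is coherent and $N\in\In(M^+)$, then $N\in\A(M)$, whence $N^+\in\F(M)$ by Proposition \ref{prop-Flatness-AbsPure2}. Conversely, assume the implication $N\in\In(M^+)\Rightarrow N^+\in\F(M)$ holds for all $M$ and all right modules $N$. By $(1)$ together with the identification one always has $N^+\in\F(M)\Rightarrow N\in\In(M^+)\Rightarrow N\in\A(M)$, so in combination with the hypothesis we obtain $N\in\A(M)\Leftrightarrow N^+\in\F(M)$ for every $M$ and every right module $N$; Proposition \ref{prop-Flatness-AbsPure2} then forces $R$ to be coherent.

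I expect the cotorsion step in $(1)$ to be the crux: the whole extension problem collapses once one notices that the obstruction $\Ext^1(F,(K/N)^+)$ vanishes because a character module is cotorsion and $F$ is flat. The remaining delicate point is the bookkeeping of left and right module structures, which must be handled carefully so that the duality is genuinely natural and so that ``extending along $N\hookrightarrow K$'' corresponds exactly to ``lifting along $K^+\twoheadrightarrow N^+$''; in the same vein, the coherence being invoked is the right coherence appearing in Proposition \ref{prop-Flatness-AbsPure2}.
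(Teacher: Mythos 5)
Your proof is correct, but it takes a genuinely different route from the paper's. For (1), the paper runs a double-dual diagram chase: it factors $f^+\sigma_M$ through a flat precover $g\colon F\to N^+$, dualizes to obtain $f=h^+g^+\sigma_N$, and concludes because $f$ then factors through the injective module $F^+$, invoking \cite[Lemma 2.2]{PinSergio}; you instead convert the extension problem along $N\hookrightarrow K$ into a lifting problem along $K^+\twoheadrightarrow N^+$ and kill the obstruction $\Ext^1(F,(K/N)^+)$ by noting that character modules are pure-injective, hence cotorsion, while $F$ is flat --- this is sound (in fact $0\to (K/N)^+\to K^+\to N^+\to 0$ is pure exact with pure-injective kernel, so it even splits, making the lift immediate). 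For (2), the paper again argues by hand: the forward implication by a chase through an injective preenvelope $N\to E$, where $E^+$ is flat by right coherence, and the converse via \cite[Theorem 1]{CS}, the pure-injectivity of $M^+$, and Proposition \ref{prop-M-flat}. You instead isolate the identification $\In(M^+)=\A(M)$, a consequence of the natural duality $\Hom(N,M^+)\cong (N\otimes_R M)^+$ that the paper never states explicitly, and then both directions of (2) become formal corollaries of Proposition \ref{prop-Flatness-AbsPure2}; this is legitimate, since that proposition is proved in Section 3 independently of the present one, so there is no circularity. The trade-off: your route is more modular and explains conceptually why coherence is exactly the right hypothesis (it is precisely the content of Proposition \ref{prop-Flatness-AbsPure2}), and your identification $\In(M^+)=\A(M)$ is a reusable fact in its own right; the paper's route is self-contained within Section~4, produces explicit factorizations (its version of (1) exhibits $f$ factoring through a concrete injective module rather than merely being extendable), and its appeal to \cite[Theorem 1]{CS} is ultimately the same external input that Proposition \ref{prop-Flatness-AbsPure2}, and hence your argument, rests on. Your attention to the left/right module bookkeeping and to the fact that the coherence in question is right coherence is consistent with the paper's usage.
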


\begin{proof}
	For any module $L$ we denote by $\sigma_L:L\to L^{++}$ the evaluation morphism.\\
	$1.$ Let $N^+\in \F(M)$ with $F\to N^+$ a flat precover and let $f:N\to M^+$ be any morphism. Then we have a morphism $f^+:M^{++} \to N^+$.  Since $N^+\in \F(M)$, there exists a morphism $h:M\to F$ such that the following diagram commutes
	$$
	\xymatrix{
		& M \ar[d]^{\sigma_M} \ar@{.>}[ddl]_{\exists h} \\
		& M^{++}  \ar[d]^{f^+} \\
		F \ar[r]_{g} & N^+
	}
	$$
	
	Then, $gh=f^+\sigma_M$ and so $h^+g^+=\sigma_M^+f^{++}$. \\
	We have the following diagram with each square commutative 
	
	$$
	\xymatrix{
		N \ar[r]^{f} \ar[d]_{\sigma_N} & M^+ \ar[d]^{\sigma_{M^+}}  \\
		N^{++} \ar[r]^-{f^{++}} \ar[d]_{g^+} 	& M^{+++}  \ar[d]^{\sigma_M^+} \\
		F^+  \ar[r]_{h^+}   & M^+ \\
	}
	$$
	
	Now, $\sigma_M^+ {\sigma_{M^+}}=1_{M^+}$. Therefore, $f=\sigma_M^+ {\sigma_{M^+}}f=h^+g^+\sigma_N$. Since $F^+$ is injective, we deduce by \cite[Lemma 2.2]{PinSergio} that $N\in \In(M^+)$.\\
	$2.$  Suppose first that $R$ is coherent and let $N \in \In(M^+)$ with $g:N\to E$ an injective preenvelope (notice that $E^+$ is flat). Then for any morphism $f:M\to N^+$, there exists a morphism $h:E\to M^+$ such that the following diagram commutes
	$$
	\xymatrix{
		M^+ \\
		N^{++}  \ar[u]^{f^+} \\
		N \ar[r]_{g}\ar[u]^{\sigma_N} & E \ar@{.>}[uul]_{\exists h}
	}
	$$
	
	Then, $hg=f^+\sigma_N$ and so $g^+h^+=\sigma_N^+f^{++}$.
	We have the following diagram with each square commutative
	
	$$
	\xymatrix{
		M \ar[r]^{f} \ar[d]_{\sigma_M} & N^+ \ar[d]^{\sigma_{N^+}}  \\
		M^{++} \ar[r]^-{f^{++}} \ar[d]_{h^+} 	& N^{+++}  \ar[d]^{\sigma_N^+} \\
		E^+  \ar[r]_{g^+}   & N^+ \\
	}
	$$
	
	Again, $\sigma_N^+ {\sigma_{N^+}}=1_{N^+}$ so $f=g^+h^+\sigma_M$. Therefore, $N^+\in \F(M)$ by Lemma \ref{Lemma-subX}.\\
	Conversely now, to prove that $R$ is right coherent it suffices to prove by \cite[Theorem 1]{CS} that for any absolutely pure right module $N$, $N^+$ is flat. For let $N$ be an absolutely pure right module. Then $N\in \In(K)$ for any pure-injective module $K$. In particular, $N\in \In(M^+)$ for any module $M$ as every character module is pure-injective (see \cite[Proposition 5.3.7]{EJ}). By assumption, $N^+\in \F(M)$ for any module $M$ and so by Proposition \ref{prop-M-flat}, $N^+$ is flat. 
\end{proof}

One can see that the implication of the first assertion in Proposition \ref{prop-Flatness-Subinj} enables us to find again the well-known result: If a module $M$ is flat then $M^+$ is injective. Indeed, if $M$ is flat, then for any right $R$-module $N$, $N^+\in \F(M)$. Thus for any right $R$-module $N$, $N\in \In(M^+)$ and so $M^+$ is injective. One can also find again that for any module $N$, if $N^+$ is flat then $N$ is absolutely pure. Indeed, since $N^+$ is flat, we have $N^+\in \F(M)$ for any right $R$-module $M$ and so $N\in\In(M^+)$ for any right $R$-module $M$. Then we can easily show that $N$ is absolutely pure: Indeed, consider $0\to N\to E(N)$. Since $N\in \In(M^+)$, $\Hom(E(N),M^+)\to \Hom(N,M^+)$ is epic. But there is a natural isomorphism $\Hom(N,M^+)\cong (M\otimes N)^+$, then $M\otimes N\to M\otimes E(N)$ is monic for any module $M$ and so $N$ is absolutely pure. \\

One of the classical characterization of QF rings is that they are those rings for which any flat module is injective. Now we characterize QF rings in terms of flat-precover completing domains and subinjectivity domains.

\begin{prop}\label{prop-QF}
	
	The following assertions are equivalent:
	\begin{enumerate}
		\item The ring $R$ is QF.
		\item For any two modules $M$ and $N$, $N\in \F(M)$ if and only if $M\in \In(N)$.
	\end{enumerate}
\end{prop}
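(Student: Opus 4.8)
The plan is to translate both domains into the language of factorizations and then exploit the fact that over a QF ring the classes of flat and injective modules coincide. By Lemma \ref{Lemma-subX}, membership $N\in\F(M)$ says exactly that every morphism $M\to N$ factors through a flat module. On the subinjectivity side, the standard characterization of \cite[Lemma 2.2]{PinSergio} — the very tool already used in the proof of Proposition \ref{prop-Flatness-Subinj} — says that $M\in\In(N)$, i.e. that $N$ is $M$-subinjective, if and only if every morphism $M\to N$ factors through an injective module. So the statement reduces to comparing ``factors through a flat module'' with ``factors through an injective module'' on the same Hom-sets.

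For $(1)\Rightarrow(2)$ I would first recall the classical fact that over a QF ring the classes of flat and injective modules coincide: QF rings are perfect, so every flat module is projective, and QF is exactly the condition that projective modules are injective; hence flat $=$ projective $=$ injective. With this, a morphism $M\to N$ factors through a flat module if and only if it factors through an injective module, and the two factorization characterizations recalled above immediately yield $N\in\F(M)\iff M\in\In(N)$.

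For $(2)\Rightarrow(1)$, as noted just before the statement it suffices to prove that every flat module is injective. Let $M$ be flat. By Proposition \ref{prop-M-flat} we have $\F(M)=R\text{-}\Mod$, so $N\in\F(M)$ for every module $N$; by hypothesis this gives $M\in\In(N)$ for every $N$, and in particular $M\in\In(M)$. Applying the subinjectivity factorization characterization to the identity (or, equivalently, the injective envelope $M\hookrightarrow E(M)$), the map $1_M$ factors through an injective module, so $M$ is a retract of an injective module and is therefore injective. Thus every flat module is injective and $R$ is QF.

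The routine part is the factorization bookkeeping; the real content is having the two correct characterizations at hand — especially the subinjectivity one from \cite[Lemma 2.2]{PinSergio} — together with the classical equality of the flat and injective classes over QF rings. I expect the only delicate point to be keeping the directions of the factorizations aligned (morphisms $M\to N$ on both sides), so that the flat-side Lemma \ref{Lemma-subX} and the injective-side subinjectivity lemma operate on the same Hom-groups; once this is arranged the argument is short.
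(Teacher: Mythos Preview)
Your proof is correct and follows essentially the same approach as the paper: both directions rest on Lemma~\ref{Lemma-subX} and \cite[Lemma~2.2]{PinSergio} together with the coincidence of flat and injective modules over a QF ring, the paper merely spelling the factorizations out explicitly via a fixed flat precover $F\to N$ and the envelope $M\hookrightarrow E(M)$. For $(2)\Rightarrow(1)$ the paper instantiates the hypothesis with the flat module in the role of $N$ (so that $\In(F)=R\text{-}\Mod$ directly gives injectivity), whereas you place it in the role of $M$ and specialize to $M\in\In(M)$; both are equally valid and equally short.
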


\begin{proof}
	$(1)\Rightarrow (2)$ Consider $0\to M\stackrel{i}\to E(M)$ and let $F\stackrel{g}\to N $ be a flat precover of $N$. Suppose that $N\in \F(M)$. To prove that $M\in \In(N)$, let $f:M\to N$ be any morphism. Since $N\in \F(M)$, there exists $h:M\to F$ such that $f=gh$. But $R$ is QF, so $F$ is injective. Thus there exists $l:E(M)\to F$ such that $li=h$ so  $f=gh=gli$ and by \cite[Lemma 2.2]{PinSergio} $M\in\In(N)$. Now suppose that $M\in\In(N)$ and let us prove that $N\in\F(M)$. For let $f:M\to N$ be any morphism. Since $M\in\In(N)$, there exists $h:E(M)\to N$ such that $hi=f$. But since $R$ is QF, $E(M)$ is flat and so by Lemma \ref{Lemma-subX} $N\in \F(M)$. 
	
	$(2)\Rightarrow (1)$  Let $F$ be a flat module. Then, $F\in \F(M)$, for every module $M$. By assumption we obtain that for every module $M$, $M\in \In(F)$ and so $F$ is injective. We conclude that $R$ is QF. 
\end{proof}

 In \cite{Sergio-Sim}, López-Permouth and Simental call a ring $R$ super QF-ring if the relative projectivity and relative injectivity domains coincide. In their paper, they prove that a ring $R$ is super $QF$-ring if and only if $R$ is isomorphic to a product of full matrix rings over artinian chain rings. Later, in \cite[Section 4]{Dur0} Y. Durğun proved that the subprojectivity and subinjectivity domains coincide if and only if $R$ is super $QF$-ring. 
In this section, we investigate the coincidence of flat-precover completing domains and subinjectivity domains. We will say that the flat-precover completing domains and subinjectivity domains coincide over $R$ if $\F(M)=\In(M)$ for any module $M$.

By following the same reasoning adopted in \cite{Dur0} and \cite{Sergio-Sim} , we show that a ring satisfies $\F(M)=\In(M)$ for any module $M$  if and only if $R$ is isomorphic to a product of full matrix rings over artinian chain rings.

\begin{prop}\label{factor-ring}
	Let $R$ be a ring over which $\F(M)\subseteq\In(M)$ for any module $M$ and let $I$ be an ideal of $R$. Then $\F(M')\subseteq\In(M')$ for any $R/I$-module $M'$.
\end{prop}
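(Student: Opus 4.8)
The plan is to work entirely over $R/I$ and to feed the global hypothesis in through its \emph{structural} consequences rather than by a naive restriction of scalars. Fix $R/I$-modules $M'$ and $N'$ with $N'\in\F(M')$ (all domains computed over $R/I$); I must show $N'\in\In(M')$ over $R/I$. First I would reduce the subinjectivity target: by the analogue over $R/I$ of the reduction in \cite{PinSergio}, it suffices to extend every $R/I$-morphism $f:N'\to M'$ along a single injective (over $R/I$) extension of $N'$, and a convenient one is $\Hom_R(R/I,E(N'))$, the $I$-torsion submodule of the $R$-injective envelope, which is $R/I$-injective and an essential $R/I$-extension of $N'$. I also record the free half of the change-of-rings for subinjectivity: since every $R/I$-extension of $N'$ is in particular an $R$-extension and every $R$-linear map between $R/I$-modules is automatically $R/I$-linear, any $R$-subinjectivity of $M'$ descends to $R/I$-subinjectivity.

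The reason a direct reduction is impossible, and the first thing I would isolate, is that one \emph{cannot} relate $\F(M')$ over $R/I$ to $\F(M')$ over $R$: a module flat over $R/I$ need not be flat over $R$ (already $R/I$ itself is a counterexample), so membership $N'\in\F_{R/I}(M')$ gives no information about $\F_R(M')$, and the hypothesis applied to $M'$ as an $R$-module is useless. Instead I would extract the structural content of the hypothesis. Taking $M$ flat gives $\F(M)=R\text{-}\Mod\subseteq\In(M)$, so $M$ is injective; hence \emph{every flat left $R$-module is injective over $R$}, and in particular $R$ is left self-injective. On the flat side over $R/I$ I would use that every flat $R/I$-module arises as $F/IF=R/I\otimes_{R}F$ for some flat $R$-module $F$ (lift a presentation of it as a direct limit of finitely generated free $R/I$-modules to $R$). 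Combining these, the flat modules through which morphisms $M'\to N'$ factor over $R/I$ are reductions of modules that are injective over $R$; this is the link that lets the hypothesis speak about the $R/I$-situation.

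The decisive step, which I expect to be the main obstacle, is bridging the two domains across the change of rings: $\F(M')$ is governed by $\Hom(M',-)$ while $\In(M')$ is governed by $\Hom(-,M')$, and change of rings transports flats through the right-exact induction $R/I\otimes_{R}-$ (coinvariants) but transports injective envelopes through the left-exact coinduction $\Hom_R(R/I,-)$ (invariants), so the two sides travel by different adjoints. To reconcile them I would pass to character modules, applying Proposition \ref{prop-Flatness-Subinj} over $R/I$ to convert the membership $N'\in\F(M')$ into a subinjectivity statement for the associated character modules; since the character module of an $R/I$-module is an $(R/I)^{\mathrm{op}}$-module whose $R^{\mathrm{op}}$-structure is controlled, the resulting injectivity obligation can be pushed to $R$, where flat-is-injective applies, and then pulled back along the coinduction to produce the required extension of $f$ to $\Hom_R(R/I,E(N'))$. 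The subtle point is that the conclusion demanded is only the \emph{weaker} subinjectivity over $R/I$ rather than over $R$, and it is exactly this slack that must absorb the invariants-versus-coinvariants discrepancy; making that bookkeeping precise, and confirming that the character-module detour does not silently require coherence, is where the real work lies.
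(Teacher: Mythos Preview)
Your proposal diverges sharply from the paper's, which is precisely the ``naive restriction of scalars'' you dismiss. The paper identifies $R/I\text{-}\Mod$ with the full subcategory of $R\text{-}\Mod$ annihilated by $I$, asserts the equalities $\F({}_{R/I}M)=\F({}_RM)\cap R/I\text{-}\Mod$ and $\In({}_RM)\cap R/I\text{-}\Mod=\In({}_{R/I}M)$, and sandwiches the hypothesis $\F({}_RM)\subseteq\In({}_RM)$ between them. Your objection to this route has real content: the inclusion the argument actually needs from the first equality is $\F_{R/I}(M')\subseteq\F_R(M')$, and factoring $M'\to N'$ through an $R/I$-flat $F'\cong F/IF$ does not obviously yield a factorization through the $R$-flat $F$, the obstruction living in $\Ext^1_R(M',IF)$. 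So you have put your finger on a step the paper leaves unjustified.

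That said, your own proposal is not a proof. You assemble the change-of-rings machinery carefully and extract the structural consequence that $R$-flats are $R$-injective, but at the moment $\F_{R/I}(M')$ must be linked to $\In_{R/I}(M')$ you gesture at character modules and two adjunctions and then concede that ``making that bookkeeping precise\ldots is where the real work lies.'' Proposition~\ref{prop-Flatness-Subinj} does not do what you need: it turns $N^{+}\in\F(M)$ into $N\in\In(M^{+})$, whereas you start from $N'\in\F(M')$ and want $N'\in\In(M')$; the character operation lands on the wrong module in both slots and simultaneously swaps left and right modules, so the bridge you sketch does not arrive where you claim. A sketch that halts exactly at the only nontrivial step is not yet an argument. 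Either carry the character-module detour through explicitly, or return to the restriction approach and supply (or replace) the missing justification for $\F_{R/I}(M')\subseteq\F_R(M')$ under the standing hypothesis.
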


\begin{proof}
	We may identify $R/I$-$\Mod$ with the full subcategory of $R$-$\Mod$ consisting of modules which annihilated by $I$. For any $R/I$-module $M$, we have $\F({}_{R/I}M)=\F({}_RM)\cap R/I$-$\Mod$. But $\F({_R}M)\subseteq\In({_R}M)$ and $\In({}_RM)\cap R/I\text{-}\Mod=\In({_{R/I}}M)$. Thus $\F({}_{R/I}M)\subseteq \In({}_{R/I}M)$. 
\end{proof}

\begin{prop}\label{prop-prod-coinc}
	Let $R_1$ and $R_2$ be rings over which the flat-precover completing domains and subinjectivity domains coincide. Then, the flat-precover completing domains and subinjectivity domains coincide over $R_1 \times R_2$.
\end{prop}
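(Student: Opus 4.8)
The plan is to exploit the decomposition of the module category of a product ring and to show that both domains are computed componentwise. Write $R = R_1 \times R_2$ and let $e_1 = (1,0)$ and $e_2 = (0,1)$ be the central idempotents. Every $R$-module $M$ splits canonically as $M = e_1 M \oplus e_2 M = M_1 \oplus M_2$, where $M_i := e_i M$ is an $R_i$-module, viewed as an $R$-module on which the other factor acts as zero. The essential observation is that cross homomorphisms vanish: any $R$-homomorphism $\phi:M_1 \to N_2$ satisfies $\phi(m)=\phi(e_1 m)=e_1\phi(m)\in e_1 N_2 = 0$, so we obtain a natural decomposition
$$\Hom_R(M,N)=\Hom_{R_1}(M_1,N_1)\oplus\Hom_{R_2}(M_2,N_2)$$
for all $R$-modules $M=M_1\oplus M_2$ and $N=N_1\oplus N_2$.

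Next I would record how the relevant homological data decompose. A module $F=F_1\oplus F_2$ is flat over $R$ if and only if each $F_i$ is flat over $R_i$; moreover an injective envelope over $R$ is the direct sum $E(N)=E(N_1)\oplus E(N_2)$ of the injective envelopes over $R_1$ and $R_2$, and a flat precover over $R$ is the direct sum $g_1\oplus g_2$ of flat precovers over the two factors (for precovers this holds because any test map from a flat $R$-module $F'=F_1'\oplus F_2'$ factors componentwise, thanks to the vanishing of cross homomorphisms). Combining these facts with the $\Hom$ decomposition above yields the two key componentwise formulas. For flatness, choosing a flat precover $g=g_1\oplus g_2:F_1\oplus F_2\to N_1\oplus N_2$ over $R$, the map $\Hom_R(M,g)$ identifies with $\Hom_{R_1}(M_1,g_1)\oplus\Hom_{R_2}(M_2,g_2)$, which is epic exactly when both summands are; by Lemma \ref{Lemma-subX} this gives
$$N\in\F({}_R M)\ \Longleftrightarrow\ N_1\in\F({}_{R_1}M_1)\ \text{and}\ N_2\in\F({}_{R_2}M_2).$$
For subinjectivity, using the characterization of \cite[Lemma 2.2]{PinSergio}, $M$ is $N$-subinjective over $R$ if and only if every map $N\to M$ extends along $N\hookrightarrow E(N)$; since $E(N)=E(N_1)\oplus E(N_2)$, the restriction map $\Hom_R(E(N),M)\to\Hom_R(N,M)$ decomposes as the direct sum of the two restrictions $\Hom_{R_i}(E(N_i),M_i)\to\Hom_{R_i}(N_i,M_i)$, whence
$$N\in\In({}_R M)\ \Longleftrightarrow\ N_1\in\In({}_{R_1}M_1)\ \text{and}\ N_2\in\In({}_{R_2}M_2).$$

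Finally I would conclude: by hypothesis $\F({}_{R_i}M_i)=\In({}_{R_i}M_i)$ for each $i$, so the two componentwise descriptions coincide, giving $\F({}_R M)=\In({}_R M)$ for every $R$-module $M$; that is, the flat-precover completing domains and subinjectivity domains coincide over $R_1\times R_2$. The main obstacle is the second paragraph: verifying cleanly that flat precovers and injective envelopes over the product ring split as direct sums over the factors and that the induced maps on $\Hom$ respect this splitting. Once the vanishing of cross homomorphisms is in hand this step is routine, but it is the one that genuinely uses the product structure and must be carried out with care; the remainder of the argument is formal.
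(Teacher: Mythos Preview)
Your proposal is correct and follows essentially the same approach as the paper: decompose every $R_1\times R_2$-module as $M=M_1\oplus M_2$ and verify that both $\F(M)$ and $\In(M)$ are computed componentwise, then invoke the hypothesis on each factor. The paper's own proof is a one-line sketch asserting $\F(M)=\F(M_1)\times\F(M_2)=\In(M_1)\times\In(M_2)=\In(M)$ without justification, so your version in fact supplies the details (vanishing of cross homomorphisms, componentwise splitting of flat precovers and injective envelopes) that the paper leaves implicit.
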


\begin{proof}
	We have for any $R_1 \times R_2$-module $M$, $M = M_1\oplus M_2$ with $M_1\in R_1$-$\Mod$ and $M_2\in R_2$-$\Mod$. Thus $\F(M)=\F(M_1)\times \F(M_2)=\In(M_1)\times \In(M_2)=\In(M)$. We conclude that $R_1 \times R_2$ satisfies $\F(M)=\In(M)$ for any module $M$.
\end{proof}

\begin{prop}\label{prop-coinc-morita}
	Let $R$ be a ring over which the flat-precover completing domains and subinjectivity domains coincide and let $S$ be Morita equivalent to $R$. Then the flat-precover completing domains and subinjectivity domains coincide over $S$.
\end{prop}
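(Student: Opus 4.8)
The plan is to show that Morita invariance of the coincidence property follows from the fact that a Morita equivalence is an exact equivalence of module categories which preserves all the categorical notions entering the definitions of both domains. Since Proposition \ref{prop-X-Y} and the basic results of Section \ref{section-subX} are formulated entirely in terms of factorizations of morphisms through a precovering class, and since both the flat class $\f$ and the subinjectivity condition are defined categorically, the whole setup should transport across the equivalence.

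First I would fix a Morita equivalence $F:R\text{-}\Mod\to S\text{-}\Mod$ with quasi-inverse $G$, both being additive exact functors that preserve and reflect monomorphisms, epimorphisms, direct sums, direct summands, injective modules and injective envelopes. The crucial observation is that $F$ sends flat modules to flat modules and, being an equivalence, sends $\f$-precovers to $\f$-precovers: if $g:X\to N$ is a flat precover in $R\text{-}\Mod$ then $F(g):F(X)\to F(N)$ is a flat precover in $S\text{-}\Mod$, because $F(X)$ is flat and the precover lifting property is expressed purely by surjectivity of $\Hom(F(X'),F(g))\cong\Hom(X',g)$ for $X'\in\f$. Using the characterization in Lemma \ref{Lemma-subX} (a morphism factors through a module in the class), I would then argue that $N\in\F(M)$ if and only if $F(N)\in\F(F(M))$: a factorization $M\to X\to N$ with $X$ flat maps under $F$ to a factorization through the flat module $F(X)$, and conversely any factorization on the $S$-side pulls back via $G$. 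This establishes that $F$ induces a bijection $\F(M)\leftrightarrow\F(F(M))$ compatible with the correspondence of objects.

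Next I would carry out the parallel argument for subinjectivity domains. The condition $N\in\In(M)$ is defined by the lifting of morphisms $N\to M$ along extensions $K\supseteq N$; since $F$ is exact it preserves extensions (short exact sequences) and the relevant $\Hom$-lifting property, so $N\in\In(M)$ if and only if $F(N)\in\In(F(M))$. Combining the two bijections, for any $S$-module $M'$ write $M'\cong F(M)$ with $M=G(M')$; then $\F(M')=\F(F(M))$ corresponds under $F$ to $\F(M)=\In(M)$ (using the hypothesis over $R$), which in turn corresponds to $\In(F(M))=\In(M')$. Hence $\F(M')=\In(M')$ for every $S$-module $M'$, as required.

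I expect the only genuine obstacle to be the verification that a Morita equivalence preserves flat precovers, rather than merely flat modules; everything else is a formal transport of categorical definitions. One must be slightly careful here because flatness is a priori a tensor-theoretic (not purely categorical) notion, but it is standard that Morita equivalences preserve flatness — for instance because $F\cong S e\otimes_{eSe}-$ type descriptions make $F$ commute with the relevant tensor constructions, or because flatness is characterized by the vanishing of $\Tor$, which is preserved under the equivalence. Once flatness is known to be preserved, the precover property transports automatically since it is phrased entirely via the $\Hom$ functor restricted to the class $\f$, and the remaining steps are routine applications of Lemma \ref{Lemma-subX} and the exactness of $F$.
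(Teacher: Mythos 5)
Your proposal is correct and follows essentially the same route as the paper, which simply asserts that for an equivalence $\phi\colon R\text{-}\Mod\to S\text{-}\Mod$ one has $A\in\F(M)$ iff $\phi(A)\in\F(\phi(M))$ and $A\in\In(M)$ iff $\phi(A)\in\In(\phi(M))$, and concludes; your write-up just supplies the details the paper leaves as ``easy to see'' (preservation of flatness, the factorization criterion of Lemma \ref{Lemma-subX}, and exactness for subinjectivity). Note that by working with characterization (3) of Lemma \ref{Lemma-subX} you do not even need the preservation of flat precovers that you flagged as the main obstacle --- preservation of flat modules in both directions suffices.
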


\begin{proof}
	Since $S$ is Morita equivalent to $R$, there exists an equivalence of categories $\phi: R\text{-}\Mod \to S$-$\Mod$. It is easy to see that $A\in \F(M)$ if and only if $\phi(A) \in \F(\phi(M))$. Similarly, $A\in \In(M)$ if and only if $\phi(A) \in \In(\phi(M))$. Thus the result follows easily.
\end{proof}

\begin{thm}\label{thm-coinc-subflat-subinj}
	The following conditions are equivalent:
	\begin{enumerate}
		\item $\F(M)=\In(M)$ for any module $M$,
		\item $\F(M)\subseteq\In(M)$ for any module $M$,
		\item $\F(M)\subseteq\In(M)$ for any module over a factor ring of $R$, 
		\item Every factor ring of $R$ is QF,
		\item $R$ is isomorphic to a product of full matrix rings over artinian chain rings.
	\end{enumerate}
\end{thm}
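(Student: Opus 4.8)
The plan is to prove the cyclic chain $(1)\Rightarrow(2)\Rightarrow(3)\Rightarrow(4)\Rightarrow(5)\Rightarrow(1)$. The implication $(1)\Rightarrow(2)$ is immediate, since equality of the two domains for every module trivially gives the inclusion. For $(2)\Rightarrow(3)$ I would invoke Proposition \ref{factor-ring} directly: the inclusion $\F(M)\subseteq\In(M)$, assumed for every $R$-module, is transported by that proposition to every factor ring $R/I$, which is precisely statement $(3)$.

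The first substantive step is $(3)\Rightarrow(4)$, whose goal is to show that each factor ring $S=R/I$ is QF. Here I would lean on the classical fact (recalled just before Proposition \ref{prop-QF}) that a ring is QF exactly when every flat module over it is injective. So fix a flat $S$-module $F$. Since the class of flat modules is contained in every flat-precover completing domain, $F$ lies in the flat-precover completing domain of every $S$-module, and hence, by $(3)$ applied to $S$, in the subinjectivity domain of every $S$-module. Taking the module $F$ itself shows that $F$ is $F$-subinjective; applying this to $\mathrm{id}_F$ and to an arbitrary extension $F\hookrightarrow K$ yields a retraction $K\to F$, so $F$ is a direct summand of every module containing it and is therefore injective. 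Thus every flat $S$-module is injective, so $S$ is QF.

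For the return implication $(5)\Rightarrow(1)$ I would combine two of the earlier results. If $R\cong\prod M_{n_i}(S_i)$ with each $S_i$ an artinian chain ring, then $R$ is artinian, hence perfect, so Corollary \ref{cor-coinc-subflat-subproj} gives $\F(M)=\P(M)$ for every module $M$. On the other hand such a ring is super QF by López-Permouth and Simental \cite{Sergio-Sim}, and Durğun's theorem \cite{Dur0} then yields $\P(M)=\In(M)$ for every $M$; chaining the two equalities gives $(1)$. (One could equally reduce $(5)$ to the case of a single artinian chain ring by means of Propositions \ref{prop-prod-coinc} and \ref{prop-coinc-morita} before appealing to these two facts.)

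The genuinely difficult step, and the one I expect to be the main obstacle, is $(4)\Rightarrow(5)$: passing from the purely homological hypothesis that every factor ring is QF to the explicit structure in $(5)$. I would handle it by invoking the ring-theoretic classification of rings all of whose factor rings are QF, identifying that class with the super QF rings and hence, through López-Permouth and Simental \cite{Sergio-Sim}, with the finite products of full matrix rings over artinian chain rings. This is the place where external structure theory, rather than the domain formalism developed in the paper, carries the argument.
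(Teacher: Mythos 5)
Your proposal is correct and follows the paper's proof almost step for step: the same cycle $(1)\Rightarrow(2)\Rightarrow(3)\Rightarrow(4)\Rightarrow(5)\Rightarrow(1)$, using Proposition \ref{factor-ring} for $(2)\Rightarrow(3)$, the observation that a flat module $F$ over a factor ring satisfies $F\in\F(F)\subseteq\In(F)$ (hence is injective) for $(3)\Rightarrow(4)$, an external classification theorem for $(4)\Rightarrow(5)$ (the paper cites \cite[Theorem 6.1]{Faith} directly rather than routing through the super QF terminology), and Corollary \ref{cor-coinc-subflat-subproj} together with Durğun's theorem for $(5)\Rightarrow(1)$. The only cosmetic difference is that your primary route for $(5)\Rightarrow(1)$ applies perfectness and \cite[Theorem 4.1]{Dur0} to $R$ globally, whereas the paper argues component-wise over each artinian chain ring and then assembles the conclusion via Propositions \ref{prop-coinc-morita} and \ref{prop-prod-coinc} --- exactly the alternative you note in parentheses.
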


\begin{proof}
	$(1)\Rightarrow (2)$ Clear. 
	
	$(2)\Rightarrow (3)$ Follows from Proposition \ref{factor-ring}.
	
	$(3)\Rightarrow (4)$ Follows from the fact that every flat module over a factor ring of $R$ is injective. Indeed, let $F$ be a flat $R/I$-module. Then $F\in \F(F)\subseteq \In(F)$ and so $F$ is injective. 
	
	$(4)\Rightarrow (5)$ Follows from \cite[Theorem 6.1]{Faith}.
	
	$(5)\Rightarrow (1)$ We suppose that $R\cong \prod_{i=1}^{k_i} M_{n_i}(D_i)$ where the $D_i$'s are artinian chain rings. By \cite[Theorem 4.1]{Dur0}, the subprojectivity and subinjectivity domains coincide over each $D_i$. But artinian chain rings are perfect so by Proposition \ref{cor-coinc-subflat-subproj} the subprojectivity and flat-precover completing domains coincide over each $D_i$. Thus, the flat-precover completing domains and subinjectivity domains coincide over each $D_i$. Then, by Proposition \ref{prop-coinc-morita}, the flat-precover completing domains and subinjectivity domains coincide over each $M_{n_i}(D_i)$ as it is Morita equivalent to $D_i$. We conclude by Proposition \ref{prop-prod-coinc} that the subinjectivity and the flat-precover completing domains coincide over $R$.
\end{proof}

It is worth noting that unlike the implication $2\Rightarrow 1$ in Theorem \ref{thm-coinc-subflat-subinj}, the implication $\In(M)\subseteq \F(M) \Rightarrow \F(M)=\In(M) $ for all modules $M$ does not hold in general. For instance, if we consider a von Neumann regular ring which is not semisimple then the inclusion $\In(M)\subseteq \F(M)$ holds for every module $M$, however there is a flat module $F$ which is not injective; that is, $\F(F)=R\text{-}\Mod$ but $\In(F)\neq R\text{-}\Mod$. It could be interesting to study rings which satisfy $\In(M)\subseteq \F(M)$ for any module $M$. We do not have a complete characterization of rings over which this property holds but we can observe that when this happens, $\I\subseteq \F(M)$ for any module $M$ and so $R$ is an $IF$-ring. We have the following result.

\begin{prop}\label{prop-embedds}
	The following conditions are equivalent for a module $M$:
	\begin{enumerate}
		\item $\I\subseteq \F(M)$.
		\item $E(M)\in \F(M)$.
		\item For any flat right $R$-module $F$, $F^+ \in  \F(M)$.
		\item $M$ embeds in a flat module.
		\item For any injective module $E$ and any submodule $N$ of $E$, if $N\in M^\perp$ then $E/N\in \F(M)$
	\end{enumerate}
\end{prop}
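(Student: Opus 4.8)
The plan is to obtain the equivalence of (1)--(4) essentially for free and then splice in the new statement (5). Indeed, taking $\x=\f$ in Proposition~\ref{prop-Inj-X(M)}, conditions (1), (2), (3), (4) here are literally conditions (3), (4), (5), (1) of that proposition (the phrase ``$M$ embeds in a module in $\x$'' becomes ``$M$ embeds in a flat module''). Hence $(1)\Leftrightarrow(2)\Leftrightarrow(3)\Leftrightarrow(4)$ requires no new argument. It therefore suffices to close a loop through (5), and I would do this by proving $(5)\Rightarrow(1)$ and $(1)\Rightarrow(5)$.

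For $(5)\Rightarrow(1)$ the key observation is that $N=0$ is an admissible choice: $0$ is a submodule of every injective module $E$, and $\Ext^1(M,0)=0$, so $0\in M^\perp$. Condition (5) then yields $E/0=E\in\F(M)$ for every injective $E$, which is exactly $\I\subseteq\F(M)$, i.e. condition (1).

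For $(1)\Rightarrow(5)$, I would assume $\I\subseteq\F(M)$ and let $E$ be injective with a submodule $N\in M^\perp$. Applying $\Hom(M,-)$ to the short exact sequence $0\to N\to E\to E/N\to 0$ and using $\Ext^1(M,N)=0$ makes $\Hom(M,E)\to\Hom(M,E/N)$ an epimorphism; thus every morphism $\phi\colon M\to E/N$ lifts to some $\psi\colon M\to E$ along the quotient map $\pi\colon E\to E/N$. Since $E$ is injective we have $E\in\I\subseteq\F(M)$, so by Lemma~\ref{Lemma-subX} the map $\psi$ factors through a flat module $F$, say $\psi=gh$ with $h\colon M\to F$ and $g\colon F\to E$. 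Then $\phi=\pi\psi=(\pi g)h$ factors through the flat module $F$, and a second appeal to Lemma~\ref{Lemma-subX} gives $E/N\in\F(M)$.

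The argument is short because the heavy lifting is delegated to Proposition~\ref{prop-Inj-X(M)}; the only genuinely new ingredient is the interplay in (5) between the $\Ext$-vanishing hypothesis $N\in M^\perp$ (which guarantees liftability of maps into $E/N$) and the membership $E\in\F(M)$ (which lets those lifts be routed through a flat module). The one place to be careful is to record explicitly that $N=0$ is allowed in (5), since that degenerate case is precisely what makes $(5)\Rightarrow(1)$ immediate; I do not anticipate any real obstacle beyond this bookkeeping.
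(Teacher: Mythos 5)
Your proposal is correct and follows essentially the same route as the paper: the equivalence $(1)\Leftrightarrow(2)\Leftrightarrow(3)\Leftrightarrow(4)$ is delegated to Proposition~\ref{prop-Inj-X(M)} with $\x=\f$, $(5)\Rightarrow(1)$ is the specialization $N=0$, and $(1)\Rightarrow(5)$ uses $\Ext^1(M,N)=0$ to lift maps $M\to E/N$ to $E$ and then routes them through a flat module via $E\in\F(M)$ and Lemma~\ref{Lemma-subX}. Your write-up merely makes explicit the composition $\phi=(\pi g)h$ that the paper leaves implicit; no substantive difference.
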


\begin{proof}
	$(1)\Leftrightarrow (2)\Leftrightarrow (3)\Leftrightarrow (4)$ Follow from Proposition \ref{prop-Inj-X(M)}.\\
	$(5)\Rightarrow (1)$ Take $N=0$.\\
	$(1)\Rightarrow (5)$ Let $E$ be an injective module. Consider the short exact sequence $0\to N \to E \to E/N\to 0$ with $N\in M^\perp$. Applying the functor $\Hom(M,-)$, we obtain the exact sequence $$\Hom(M,E)\to \Hom(M,E/N) \to \Ext^1(M,N) $$
	
	Since $\Ext^1(M,N)=0$, $\Hom(M,E)\to \Hom(M,E/N)$ is epic. Since $E\in \F(M)$, every map from $M$ to $E$ factors through a flat module and by Lemma \ref{Lemma-subX}, $E/N\in \F(M)$.
\end{proof}

Now, applying Proposition \ref{prop-embedds} to the class of finitely presented modules and to the class of pure-projective modules, we find characterizations of $IF$-rings in terms of flat-precover completing domains, some of which figure in \cite[Theorem 1]{Colby}.

We recall first that modules for which the subinjectivity domain is as small as possible are called indigent modules (see \cite[Definition 3.1]{PinSergio}). Here, modules for which the flat-precover completing domain is as small as possible will be called f-rugged modules to distinguish them from rugged modules defined in \cite{Rugged}. The flat-precover completing domain of such modules will consist of only flat modules.

Notice that f-rugged modules exist for any ring $R$. Indeed, consider $X=\oplus_{M\in S}M$, where $S$ is representative set of finitely presented modules. Then by Corollary \ref{cor-coinc-subflat-subproj} and \cite[Proposition 2.1]{Dur0} , we have $\F(X)=\f$. Recall that a module $N$ is said to be $FP$-injective if $\Ext^1(F,N)=0$, for every finitely presented module $F$.

\begin{cor}\label{cor-IF-rings}
	The following assertions are equivalent:
	\begin{enumerate}
		\item $R$ is an $IF$-ring.
		\item Any module embeds in a flat module.
		\item Any pure-projective module embeds in a flat module.
		\item Any finitely presented module embeds in a flat module.
		\item For any flat module $F$, $F\+$ is flat.
		\item For any injective module $E$ and any submodule $N$ of $E$, if $N$ is injective then $E/N$ is flat.
		\item For any injective module $E$ and any submodule $N$ of $E$, if $N$ is $FP$-injective then $E/N$ is flat.
		\item There exists an f-rugged module that embeds in a flat module.
		
	\end{enumerate}
	
\end{cor}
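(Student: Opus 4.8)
The plan is to reduce every condition to the per-module characterization of Proposition \ref{prop-embedds}, using as a hinge the fact (recorded at the start of this section via Lemma \ref{Lemma-subX}) that a module $N$ is flat if and only if $N\in\F(M)$ for every module $M$, equivalently for every finitely presented $M$. The very first step is to rewrite (1): an injective module $E$ is flat precisely when $E\in\F(M)$ for all $M$, so $R$ is an $IF$-ring if and only if $\I\subseteq\F(M)$ for every module $M$ --- and, since flatness is detected on finitely presented modules, if and only if $\I\subseteq\F(M)$ for every finitely presented $M$. Every remaining equivalence will be read off from this reformulation.

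I would then close the cycle $(1)\Rightarrow(2)\Rightarrow(3)\Rightarrow(4)\Rightarrow(1)$. Here $(1)\Rightarrow(2)$ is immediate, since any module $M$ embeds in its injective envelope $E(M)$, which is injective hence flat; the implications $(2)\Rightarrow(3)\Rightarrow(4)$ are specializations to pure-projective and then to finitely presented modules. For $(4)\Rightarrow(1)$ I apply Proposition \ref{prop-embedds}: a finitely presented $M$ that embeds in a flat module satisfies $\I\subseteq\F(M)$; ranging over all finitely presented $M$ and invoking the finitely presented test for flatness from the hinge gives that every injective is flat.

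The equivalences $(1)\Leftrightarrow(5)$, $(1)\Leftrightarrow(6)$ and $(1)\Leftrightarrow(8)$ are then handled against (1). For (5), observe that $F^+$ is flat exactly when $F^+\in\F(M)$ for all $M$; thus (5) asserts that condition (3) of Proposition \ref{prop-embedds} holds for every $M$, which is precisely $\I\subseteq\F(M)$ for every $M$. For (6), the direction $(6)\Rightarrow(1)$ is obtained by taking $N=0$, while $(1)\Rightarrow(6)$ follows because an injective submodule $N$ of an injective $E$ splits off, so $E/N$ is again injective and therefore flat. For (8), the f-rugged module $X$ built before the statement satisfies $\F(X)=\f$ and, once (2) holds, embeds in a flat module, giving $(1)\Rightarrow(8)$; conversely an f-rugged $M$ embedded in a flat module forces, via Proposition \ref{prop-embedds}, $\I\subseteq\F(M)=\f$, i.e.\ every injective is flat.

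The main obstacle is $(1)\Leftrightarrow(7)$, and precisely its forward direction. The reverse $(7)\Rightarrow(1)$ is again the case $N=0$, since the zero module is $FP$-injective. In $(1)\Rightarrow(7)$ the delicate point is that $FP$-injectivity of $N$ only provides $N\in M^\perp$ when $M$ is finitely presented, so condition (5) of Proposition \ref{prop-embedds} cannot be invoked for arbitrary $M$. I would therefore apply that condition only for finitely presented $M$ --- where $N\in M^\perp$ holds and $\I\subseteq\F(M)$ holds by (1) --- to get $E/N\in\F(M)$ for every finitely presented $M$, and then upgrade to flatness of $E/N$ through the finitely presented detection of flatness from the hinge. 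This coordination between $FP$-injectivity, orthogonality restricted to finitely presented modules, and the finitely presented test for flatness is the only genuinely non-formal ingredient in the argument.
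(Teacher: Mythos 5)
Your proposal is correct and takes essentially the same route as the paper: both reduce all conditions to Proposition \ref{prop-embedds} applied to the classes of all modules, pure-projective modules, and finitely presented modules, with the hinge being that flatness of $N$ is detected by $N\in\F(M)$ for all (equivalently, all finitely presented) $M$, which is exactly how the paper handles the delicate case $(1)\Leftrightarrow(7)$. Your minor variations --- organizing $(1)$--$(4)$ as a cycle and proving $(1)\Rightarrow(6)$ by the elementary splitting argument instead of via condition (5) of Proposition \ref{prop-embedds} --- do not change the substance of the argument.
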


\begin{proof}
	$1 \Leftrightarrow 2 \Leftrightarrow 3 \Leftrightarrow 4 \Leftrightarrow 5 \Leftrightarrow 6 \Leftrightarrow 7$ Follow from Proposition \ref{prop-embedds} by considering each time every module $M\in R$-$\Mod$, every pure-projective module and every finitely presented module. Notice that a module $N$ is flat if and only $N\in \F(M)$ for every $M\in R$-$\Mod$  if and only $N\in \F(M)$ for every pure-projective module $M$ if and only $N\in \F(M)$ for every finitely presented module $M$.\\
	$1 \Rightarrow 8 $ Clear.\\
	$8 \Rightarrow 1 $ Suppose that there exists an f-rugged module $M$ which embeds in a flat module. Hence,  $\F(M)=\f$ and by Proposition \ref{prop-embedds}, $\I\subseteq \F(M)$. Whence, any injective module is flat so $R$ is an $IF$-ring.
\end{proof}

 The following proposition relates the existence of f-rugged modules to that of indigent modules. Notice that f-rugged modules are not necessarily finitely generated.

\begin{prop}\label{prop-frugged}
	Let $M$ be a module. If $R$ is right Noetherian and $M$ is f-rugged, then $M^+$ is indigent. If furthermore $M$ is finitely generated, then we have an equivalence.
\end{prop}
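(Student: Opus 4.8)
The goal is to connect f-rugged modules over a right Noetherian ring with indigent modules via the character module functor. Recall that $M$ is f-rugged means $\F(M) = \f$, the class of flat modules, and $M^+$ is indigent means $\In(M^+)$ is as small as possible, namely $\In(M^+) = \I$, the class of injective modules.

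The plan is to exploit the duality between flatness and injectivity that the character functor provides, together with the hypothesis that $R$ is right Noetherian. The key technical input is Proposition \ref{prop-Flatness-Subinj}: its first assertion states that $N^+ \in \F(M)$ implies $N \in \In(M^+)$, and since $R$ is Noetherian (hence coherent) the second assertion gives the converse, so $N^+ \in \F(M)$ if and only if $N \in \In(M^+)$ for any right module $N$. First I would use this to show that $M$ being f-rugged forces $M^+$ to be indigent. Suppose $N \in \In(M^+)$; I want to conclude $N$ is injective. From the equivalence, $N^+ \in \F(M) = \f$, so $N^+$ is flat; since $R$ is right Noetherian, $N$ is a right module whose character module is flat, and over a Noetherian ring flatness of $N^+$ forces $N$ to be absolutely pure, hence injective because over a Noetherian ring absolutely pure modules are injective. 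Thus $\In(M^+) \subseteq \I$, and the reverse inclusion is automatic, so $M^+$ is indigent.

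For the converse direction, under the additional hypothesis that $M$ is finitely generated, I would run the argument backwards. Assume $M^+$ is indigent, i.e. $\In(M^+) = \I$, and take any $N \in \F(M)$; I must show $N$ is flat. The subtlety is that $\F(M)$ may contain non-flat modules a priori, so I need the finite generation of $M$ to bridge the gap. Over a right Noetherian ring a finitely generated module is finitely presented, so Proposition \ref{prop-Flatness-AbsPure} applies: $N \in \F(M)$ if and only if $N^+ \in \A(M)$. I would then translate membership in the absolutely pure domain back through the character duality and the subinjectivity statement to force $N$ itself into the flat class, concluding $\F(M) \subseteq \f$; since the reverse inclusion always holds we get $\F(M) = \f$, i.e. $M$ is f-rugged.

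The main obstacle I anticipate is keeping straight the two dualities that are operating simultaneously: the flat/injective swap governed by $(-)^+$ and the Noetherian hypothesis needed to pass from ``$N^+$ flat'' to ``$N$ injective'' and from finitely generated to finitely presented. In particular the step where I deduce injectivity of $N$ from flatness of $N^+$ relies essentially on the Noetherian condition (over a general ring one only gets absolute purity of $N$), and the converse direction genuinely needs $M$ finitely presented so that Proposition \ref{prop-Flatness-AbsPure} is available; this is exactly why the equivalence, as opposed to the one-way implication, requires the extra finite generation assumption. I would be careful to invoke the Noetherian hypothesis explicitly at each of these junctions rather than the weaker coherence used elsewhere.
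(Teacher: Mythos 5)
Your first direction is correct and is exactly the paper's argument: from $N\in\In(M^+)$ you use Proposition \ref{prop-Flatness-Subinj} (the coherent case, which Noetherian supplies) to get $N^+\in\F(M)=\f$, and flatness of $N^+$ forces $N$ to be absolutely pure, hence injective over a right Noetherian ring. The converse, however, has a genuine gap in its very framing. The equivalence asserted (and proved in the paper) for finitely generated $M$ is between ``$R$ is right Noetherian \emph{and} $M$ is f-rugged'' and ``$M^+$ is indigent'': the Noetherian condition is part of the conclusion in the backward direction, not a standing hypothesis. The paper therefore first derives it from indigence: $M^+$ is pure-injective, absolutely pure modules lie in the subinjectivity domain of every pure-injective module, so indigence of $M^+$ forces every absolutely pure module to be injective, and then \cite[Theorem 3]{Megibben} gives that $R$ is right Noetherian. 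You instead invoke ``over a right Noetherian ring a finitely generated module is finitely presented'' at the start of the converse, i.e., you assume precisely what must be proved; as written, your argument only establishes the weaker statement in which Noetherianity is assumed on both sides.

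Second, even granting Noetherianity, the core of your converse is left unexecuted. After reducing to ``$N\in\F(M)$ iff $N^+\in\A(M)$'' via Proposition \ref{prop-Flatness-AbsPure}, you say you would ``translate membership in the absolutely pure domain back through the character duality and the subinjectivity statement,'' but you never say how the indigence of $M^+$ actually enters. The missing chain is: by Proposition \ref{prop-Flatness-AbsPure2} (which needs coherence), $N\in\F(M)$ yields $N^{++}\in\F(M)$; applying Proposition \ref{prop-Flatness-Subinj}(1) to the \emph{right} module $N^+$ then lands you on $N^+\in\In(M^+)$; indigence makes $N^+$ injective; and finally Lambek's theorem (a module is flat if and only if its character module is injective) converts injectivity of $N^+$ into flatness of $N$. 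The pivot of the whole argument --- producing a module of the form $X=N^+$ inside $\In(M^+)$ so indigence can bite, and then passing from ``$N^+$ injective'' back to ``$N$ flat'' --- is exactly the idea your outline omits, so the converse is not yet a proof.
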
 

\begin{proof}
	Let $N\in \In(M^+)$. By Proposition \ref{prop-Flatness-Subinj}, $N^+ \in \F(M)$. But $M$ is f-rugged so $N^+$ is flat and so $N$ is injective. We conclude that $M^+$ is indigent.\\
	Now let $M$ be a finitely generated such that $M^+$ is indigent and let us prove that $R$ is right Noetherian. We have $M^+$ is pure-injective and it is easy to see that the class of absolutely pure modules is inside the subinjectivity domain of any pure-injective module. Thus, every absolutely pure module is injective and by \cite[Theorem 3]{Megibben} we deduce that $R$ is right Noetherian.\\
	It is left to prove that $M$ is f-rugged. For let $N\in \F(M)$. Since $M$ is finitely generated and $R$ is right Noetherian, $M$ is finitely presented. By Proposition \ref{prop-Flatness-AbsPure2} we have $N^{++}\in \F(M)$ and applying Proposition \ref{prop-Flatness-Subinj} we deduce that $N^+\in \In(M^+)$. Since $M^+$ is indigent, $N^+$ is injective and thus $N$ is flat. We conclude that $M$ is f-rugged.
\end{proof}

Houda Amzil:   CeReMAR Center; Faculty of Sciences, Mohammed V University in Rabat, Rabat, Morocco.

\noindent e-mail address: houda.amzil@um5r.ac.ma; ha015@inlumine.ual.es

Driss Bennis:   CeReMAR Center; Faculty of Sciences, Mohammed V University in Rabat, Rabat, Morocco.

\noindent e-mail address: driss.bennis@um5.ac.ma; driss$\_$bennis@hotmail.com

J. R. Garc\'{\i}a Rozas: Departamento de  Matem\'{a}ticas,
Universidad de Almer\'{i}a, 04071 Almer\'{i}a, Spain.

\noindent e-mail address: jrgrozas@ual.es

Luis Oyonarte: Departamento de  Matem\'{a}ticas, Universidad de
Almer\'{i}a, 04071 Almer\'{i}a, Spain.

\noindent e-mail address: oyonarte@ual.es

\end{document}